\documentclass[reqno, 11pt]{amsart} 
\usepackage{amsfonts, amsmath, amssymb, amsthm}
\usepackage[margin=2.65cm, heightrounded]{geometry}
\usepackage{fancybox}
\usepackage{hhline, float}
\usepackage{mathrsfs}
\usepackage{nccmath}
\usepackage{bm}
\usepackage{ulem}
\usepackage{cite}
\usepackage[dvipsnames]{xcolor}
\usepackage[colorlinks=true]{hyperref}
\hypersetup{citecolor=red, linkcolor=blue}

\usepackage{hhline,float}
\usepackage{enumitem}
\usepackage{microtype}

\allowdisplaybreaks
\numberwithin{equation}{section}

\newtheorem{thm}{Theorem}[section]
\newtheorem{lemma}[thm]{Lemma}
\newtheorem{prop}[thm]{Proposition}
\newtheorem{cor}[thm]{Corollary}

\theoremstyle{definition}
\newtheorem{rmk}[thm]{Remark}
\newtheorem{defn}[thm]{Definition}

\newcommand{\pa}{\partial}

\newcommand{\R}{\mathbb{R}}

\newcommand{\e}{\textup{e}}

\newcommand{\dx}{\textup{d}x}

\newcommand{\calR}{\mathcal{R}}

\makeatletter
\@namedef{subjclassname@2020}{\textup{2020} Mathematics Subject Classification}
\makeatother

\begin{document}
\title[Anisotropic N-Laplacian equation]
{Rigidity of weak solutions for anisotropic N-Laplacian equation with Neumann or Robin boundary condition}

\author{Yuxia Guo}
\address[Yuxia Guo]{Department of Mathematical Sciences, Tsinghua University, Beijing, P.R. China}
\email{yguo@tsinghua.edu.cn}

\author{Yichen Hu}
\address[Yichen Hu]{School of Mathematical Sciences, Dalian University of Technology, Dalian, Liaoning, P. R. China}
\email{huyc24@dlut.edu.cn}

\author{Shaolong Peng}
\address[Shaolong Peng]{School of Mathematical Sciences, Beihang University (BUAA), Beijing, P.R. China}
\email{slpeng@amss.ac.cn}

\author{Tingfeng Yuan}
\address[Tingfeng Yuan]{Department of Mathematical Sciences, Tsinghua University, Beijing, P. R. China}
\email{ytf22@mails.tsinghua.edu.cn}

\begin{abstract}
This paper is devoted to the rigidity of weak solutions for anisotropic $N$-Laplacian equations with Neumann or Robin boundary conditions on smooth bounded convex domains of $\mathbb{R}^N$. The anisotropic operator is given by $$a(\xi) = H^{N-1}(\xi)\nabla H(\xi),$$
where $H$ stands for a norm on $\mathbb{R}^N$; this formulation contains the classical $N$-Laplacian as a special case. We establish a key integral inequality involving the anisotropic gradient and the second fundamental form of the domain boundary, which acts as the core technical tool in our proofs. Under natural monotonicity assumptions on the nonlinearity, we prove that all weak solutions to the Neumann boundary problem are constant, without requiring any a priori boundedness assumption on the solution. Furthermore, we extend this rigidity result to Robin boundary value problems by imposing suitable constraints on the boundary nonlinear term. Moreover, our rigidity results remain valid not only on bounded convex domains but also on suitable unbounded domains. By working under substantially weaker assumptions than those previously available, we establish rigidity results that fill the gaps in the existing literature for anisotropic $N$-Laplacian equations with nonlinear boundary conditions and substantially extend the rigidity theory of anisotropic quasilinear elliptic equations at the critical exponent $p=N$.
\end{abstract}

\date{}
\subjclass[2020]{Primary: 35J92; Secondary: 35B53.}
\keywords{anisotropic $N$-Laplacian; rigidity of weak solutions;  Neumann  boundary condition;  Robin boundary condition; convex domains.}
\maketitle

\section{Introduction and background}

Nonlinear elliptic equations involving the $p$-Laplacian have been studied for a long time. They arise naturally in nonlinear elasticity, geometry, reaction-diffusion models and variational problems. A typical example is
\begin{align}\label{p-Laplaican}
    -\Delta_p u=f(u), \ \ \text{in} \ \ \Omega \subset \R^N,
\end{align}
where $\Delta_p u=\operatorname{div}(|\nabla u|^{p-2}\nabla u)$ is the $p$-Laplacian operator, $\Omega\subset \R^N$ is a smooth bounded domain. The qualitative behavior of solutions is closely related to the growth of the nonlinearity $f$. When $1<p<N$, the Sobolev critical exponent $p^*=\frac{Np}{N-p}$ provides the natural critical growth. However, the limiting case $p=N$ is different because there is no finite Sobolev critical exponent, and the corresponding critical growth is of the exponential type, as described by the Trudinger-Moser inequality, see \cite{S1071, T1967}. Thus exponential nonlinearities are natural in the study of equation involving $N$-Laplacian.

In this paper, we will focus on the rigidity results of the solutions for \eqref{p-Laplaican}. In the case of  $p=2$, equation \eqref{p-Laplaican} reduces to the Laplace equation, Gidas and Spruck \cite{GS1981} proved that \eqref{p-Laplaican} ($p=2$) has no non-negative nontrivial solution when $N \geq 3$, $\Omega = \R^N$, $f(u) = u^q$ with $1< q < \frac{N+2}{N-2}$. When $q=\frac{N+2}{N-2}$, Gidas, Ni and Nirenberg \cite{GNN1979} studied the symmetric property of solutions and classified all of the positive solutions under an additional assumption that $u(x) = O(|x|^{N-2})$ at infinity. Later, by employing the Kelvin transformation, Caffarelli, Gidas and Spruck \cite{CGS1989} removed this assumption and proved the same result. On the other hand, if $\Omega$ is star-shaped, $u$ satisfies the Dirichlet boundary condition on $\pa\Omega$ and $f(u) = u^q$ with $q \geq \frac{N+2}{N-2}$, then the Pohozaev identity yields the non-existence result of the non-negative solution for \eqref{p-Laplaican} with $p=2$, see \cite{P1965}.

For $p$-Laplacian type equations with $1<p<N$, Liouville type theorems for \eqref{p-Laplaican} also have a long history. We mention the works of Mitidieri and Pohozaev \cite{MP1999,MP2001}, Bidaut-V\'{e}ron and Pohozaev \cite{BP2001}, Birindelli and Demengel \cite{BD2002}, and Serrin and Zou \cite{SZ2002}. These works show that the existence or nonexistence of nontrivial entire solutions is closely related to the exponent $p$, the dimension $N$, and the growth exponent of the nonlinearity $f(u)$. In particular, Serrin and Zou \cite{SZ2002} established Cauchy-Liouville type theorems and universal boundedness results for a broad class of quasilinear elliptic equations and inequalities. Dai et al. \cite{DDGL2026} generalized the existence results of positive nonradial solutions for the H\'{e}non equation:
 $$-\Delta u=(N+\alpha)(N-2)|x|^\alpha u^{\frac{N+2+2\alpha}{N-2}} \quad \hbox{in } \mathbb{R}^N, \alpha>0, N\geq 3,$$
 to the general $p$-Laplacian, and analyzed bifurcation phenomena at critical parameter values via approximation and topological arguments. Besides, for critical $p$-Laplacian equations, the rigidity  results are related to the extremals of the Sobolev inequality. Positive finite-energy solutions of \eqref{p-Laplaican} in $\R^N$ with $f(u) = u^{p^* -1}$ are classified as Sobolev bubbles, see \cite{DMMS2014, V2016, S2016}. Rigidity  results or classification without finite-energy assumptions, as well as anisotropic results in convex cones, have also been studied in \cite{CMR2023, O2025, CFR2020}. Furthermore, in a bounded convex domain, the non-existence of positive bounded solution of \eqref{p-Laplaican} with Neumann or Robin boundary condition was established in \cite{CCR2021}.

In the limiting case of $p=N$, the Sobolev critical power is replaced by the Trudinger--Moser exponential growth. Elliptic problems involving the $N$-Laplacian and exponential nonlinearities have been studied both on bounded domains and in the whole space, see, for example, \cite{A1990, do1996, SS1999, LL2012, deF2022} and references therein. In particular, if $f(u) = e^u$ and $\Omega = \R^N$, the finite-mass solutions of \eqref{p-Laplaican} were classified in
\cite{E2018}. Related anisotropic classification results for the Finsler $N$-Laplacian were obtained in \cite{CL2024}. Recently, half-space problems with nonlinear Neumann boundary conditions have also been considered in \cite{DGHP2026}.

For the subcritical anisotropic $p$-Laplacian equations with $1<p<N$, the classification and non-existence of non-constant weak solutions under Neumann and Robin boundary conditions have been well established in the literature. In particular, the seminal work by Ciraolo et al. \cite{CCR2021} systematically investigated such problems on convex domains. By establishing a delicate integral inequality associated with the anisotropic gradient and the second fundamental form of the domain boundary, they proved that all positive bounded weak solutions to the Neumann problem are constant under suitable monotonicity assumptions on the nonlinearity. Furthermore, they extended this rigidity result to problems with general nonlinear Robin boundary terms under additional structural constraints. Their main approach relies on integral identities specifically constructed for the subcritical regime, which have become a fundamental tool for related studies.

Despite the rich achievements for $1<p<N$, \textit{rigidity results (including Liouville-type theorems and classification results)  for the anisotropic $N$-Laplacian with general nonlinearities under Neumann and nonlinear Robin boundary conditions on bounded convex domains remain largely unexplored}. Most existing researche on the anisotropic $N$-Laplacian (or $p$-Laplacian) mainly focus on spectral theory \cite{PPS}, existence and multiplicity of nontrivial solutions via variational methods  \cite{FH, GL2005}, or explicit solution classification for exponential nonlinearities in the whole space \cite{CL2024} and unbounded convex cones \cite{DGL}. Very recently, Chen et al. \cite{CDGL2026} extended the classical Liouville-type results for $p$-Laplacian equations from $\mathbb{R}^N$ to general convex cones. Without imposing the finite-energy condition, they proved the triviality of all nonnegative solutions in the subcritical case and established a complete classification of positive solutions for the critical problem. For a class of weighted $N$-Liouville equations, Ciraolo, Esposito and Li \cite{CEL} developed a new analytical framework based on the $P$-function method. They derived the explicit form of all solutions for $-1<\alpha\leq 0$, and further characterized the degeneracy of radial solutions at some special parameter values.
The integral framework developed for subcritical $p$-Laplacian equations cannot be directly adapted to the critical case $p=N$, due to the breakdown of the Sobolev conjugate exponent and singular transformation formulas. Moreover, \textit{there are almost no available conclusions concerning the constant solution property for weak solutions to nonlinear Robin boundary value problems involving the anisotropic $N$-Laplacian}.

 The main purposes of present paper are to solve the above problems and establish new classification results for the anisotropic $N$-Laplacian equations with Neumann and Robin boundary conditions. Utilizing a newly derived integral inequality that couples anisotropic gradient quantities and the boundary second fundamental form, we prove that all weak solutions to the Neumann problem must be constant under reasonable conditions on the nonlinear term. We further extend this rigidity conclusion to Robin boundary value problems by imposing proper assumptions on the boundary nonlinearity. Our work fills the above research gaps and complements the existing theory for anisotropic quasilinear elliptic operators.

More precisely, in the following of the  paper,  we will consider the rigidity result for the following anisotropic $N$-Laplacian equation with Neumann or Robin type boundary conditions. Let $\Omega\subset\mathbb{R}^N$ be a smooth bounded convex domain. We consider  the following problem:
\begin{align}\label{Main}
    \begin{cases}
        \operatorname{div}(a(\nabla u))+f(u)=0,&\text{in }\Omega,\\
        a(\nabla u)\cdot\nu+h(u)=0,&\text{on }\partial\Omega,
    \end{cases}
\end{align}
where
   \begin{align*}
         a(\xi)=H^{N-1}(\xi)\nabla H(\xi),
   \end{align*}
and $H$ is a norm in $\mathbb{R}^N$. When $H(\xi)=|\xi|$, the operator becomes the standard $N$-Laplacian. Besides, the boundary term $h(u)$ allows us to treat both pure Neumann and Robin type boundary conditions. First, we define the weak solution of equation \eqref{Main}:

\begin{defn}[Weak solution]\label{def:weak-solution-main}
We say that $u\in W^{1,N}(\Omega)$ is a weak solution of \eqref{Main} if $f(u)\in L^1(\Omega)$, $h(u)$ is integrable on $\partial\Omega$, and
\begin{align}\label{weak-form}
        \int_{\partial\Omega} h(u)\phi\,d\sigma
        +\int_{\Omega} a(\nabla u)\cdot\nabla\phi\,dx
        =\int_{\Omega} f(u)\phi\,dx,
        \qquad \forall\phi\in C^1(\overline\Omega).
\end{align}
\end{defn}

Our first result concerns the pure Neumann problem:

\begin{thm}\label{Thm1}
Let $\Omega\subset\R^N$ be bounded, connected, convex and of class $C^2$.  Let $H$ be a $C^2$ strictly convex norm and let $f\in C^1(\R)$ satisfy
\begin{align}\label{Thm1-assumption}
        \Phi'(t)\le 0 \ \ \text{for all }\ t\in\R, \ \ \Phi(t)=e^{-t}f(t).
\end{align}
If $u$ is a weak solution of
\begin{equation}\label{Thm1-equ}
    \begin{cases}
        \operatorname{div}(a(\nabla u))+f(u)=0, &\text{in }\Omega,\\
        a(\nabla u)\cdot\nu=0, &\text{on }\partial\Omega,
    \end{cases}
\end{equation}
then $u$ is constant.
\end{thm}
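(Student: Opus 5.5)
The plan is to adapt the integral method of Ciraolo et al.\ \cite{CCR2021} to the critical case $p=N$. The role of the power substitution used for $p<N$ is taken over by the exponential change of unknown $v=e^{-u/(N-1)}$; formally, $u=-(N-1)\log v$. The exponent is chosen so that, after the change, the natural weight $e^{-u}$ attached to $f$ in \eqref{Thm1-assumption} appears. With $\Phi(t)=e^{-t}f(t)$, the equation becomes $\operatorname{div}(a(\nabla u))=-e^{u}\Phi(u)$.

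\textbf{Step 1: regularity.} From $u\in W^{1,N}$ and the Trudinger--Moser inequality, $e^{\gamma|u|}\in L^q(\Omega)$ for every $q<\infty$. Since $f\in C^1$ and $\Phi$ is nonincreasing, I will bound $f(u)$ so that $f(u)\in L^q$ for large $q$. Serrin-type estimates for the $N$-Laplacian with Neumann data then give $u\in L^\infty$, and the regularity theory of DiBenedetto/Lieberman up to the boundary gives $u\in C^{1,\alpha}(\overline\Omega)$. This is where working without any a priori boundedness assumption pays off.

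Because $a$ is degenerate where $\nabla u=0$, I will regularize before differentiating. I replace $H^N$ by $(\varepsilon^2+H^2)^{N/2}$, or equivalently approximate $\Omega$ and $f$, so that the approximate solutions $u_\varepsilon$ are $C^{2}$ up to the boundary. Then $a(\nabla u_\varepsilon)\in W^{1,2}$, and $a(\nabla u)\in W^{1,2}_{loc}$ in the limit, as in \cite{CCR2021}.

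\textbf{Step 2: the key integral identity/inequality.} For smooth vector fields I use the anisotropic Bochner/Reilly-type identity
\[
\operatorname{div}\big((\nabla a(\nabla u))^T\cdots\big):\quad \operatorname{tr}\big((Da(\nabla u))^2\big)-\tfrac1N(\operatorname{div} a(\nabla u))^2\ge0,
\]
which is Cauchy--Schwarz for the matrix $Da(\nabla u)=H^{N-1}D^2\!\big(\tfrac{H^N}{N}\big)$-weighted Hessian; this matrix has real nonnegative eigenvalues. I multiply it by a weight $e^{\beta u}$. Integrating by parts, the boundary term is
\[
\int_{\partial\Omega} e^{\beta u}\, H^{2(N-1)}(\nabla u)\, \mathrm{II}\big(\nabla_{T}H(\nabla u),\nabla_T u\big)\,d\sigma ,
\]
and it has a sign: the conormal condition $a(\nabla u)\cdot\nu=0$ makes the normal part vanish, and convexity makes the second fundamental form nonnegative. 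This is the ``key integral inequality'' announced in the abstract.

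\textbf{Step 3: conclusion.} Substituting the equation, $\operatorname{div}a=-f(u)$, and choosing $\beta$ appropriately (I expect $\beta=-1$ after tracking the $N$-homogeneity, consistent with the weight $e^{-u}$), the interior terms combine into
\[
\int_\Omega e^{\beta u}\Big[\operatorname{tr}(Da)^2-\tfrac1N f^2\Big]\,dx-\int_\Omega e^{\beta u}\,\Phi'(u)e^{u}H^N(\nabla u)\,dx\cdot c_N+\text{boundary}\le0 .
\]
Every term is nonnegative, so each must vanish. In particular $\Phi'(u)H^N(\nabla u)=0$, and the trace term forces $Da(\nabla u)=\tfrac{\operatorname{div}a}{N}\,\mathrm{Id}$.

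If $\Phi'<0$ strictly, I can conclude directly. In the borderline case I use the umbilicity $Da=\lambda\,\mathrm{Id}$, together with the vanishing of the boundary term, to show $a(\nabla u)=\lambda(x-x_0)$ with $\lambda$ constant. Testing with $\phi\equiv1$ gives $\int f(u)=0$, while the divergence theorem gives $\int\operatorname{div}a = N\lambda|\Omega|$. Hence $\lambda=0$, so $\nabla u\equiv0$, and connectedness gives that $u$ is constant.

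\textbf{Main obstacle.} I expect two points to be delicate. The first is Step 1 near the boundary: there $a(\nabla u)$ only has $W^{1,2}$ regularity on the regularized level, and the limit $\varepsilon\to0$ must be taken inside the boundary term, which I would handle by Fatou's lemma since that term has a sign. The second is exactly matching the exponent $\beta$ so that the cross term $\nabla e^{\beta u}\cdot Da\,a$ is absorbed through the equation into the $\Phi'$ term. I would first try the computation with $v=e^{-u/(N-1)}$, where the equation becomes $N$-homogeneous and the cancellation should mirror that of \cite{CCR2021}.
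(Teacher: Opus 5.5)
The genuine gap is in Steps 2--3. Cauchy--Schwarz for $Da(\nabla u)$ itself, with any weight $e^{\beta u}$, cannot produce the $\Phi'$ term, and your Step 3 display is asserted rather than derived. Carry out the computation with $A=a(\nabla u)$. Start from $\operatorname{div}\big(e^{\beta u}[(DA)A-(\operatorname{div}A)A]\big)=e^{\beta u}\big[\operatorname{tr}((DA)^2)-(\operatorname{div}A)^2\big]+\beta e^{\beta u}\big[\nabla u\cdot(DA)A-(\operatorname{div}A)H^N(\nabla u)\big]$. Euler's relation $D^2_\xi\big(\tfrac1N H^N\big)(\xi)\,\xi=(N-1)a(\xi)$ gives $\nabla u\cdot(DA)A=\tfrac{N-1}{N}A\cdot\nabla\big(H^N(\nabla u)\big)$. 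Integrate this by parts against $e^{\beta u}$, using $\operatorname{div}A=-f(u)$, $A\cdot\nu=0$ on $\partial\Omega$, and the equation tested with $e^{\beta u}f(u)$. The result is exactly
\begin{align*}
&\int_\Omega e^{\beta u}\Big[\operatorname{tr}\big((DA)^2\big)-\tfrac1N f(u)^2\Big]\,dx+\int_{\partial\Omega}e^{\beta u}\,\Pi_x(A,A)\,d\sigma\\
&\qquad=\tfrac{N-1}{N}\int_\Omega e^{\beta u}f'(u)H^N(\nabla u)\,dx-\beta\int_\Omega e^{\beta u}f(u)H^N(\nabla u)\,dx+\beta^2\,\tfrac{N-1}{N}\int_\Omega e^{\beta u}H^{2N}(\nabla u)\,dx.
\end{align*}
The left side is nonnegative. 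The last term on the right, however, is strictly positive for $\beta\neq0$ and nonconstant $u$. The hypothesis on $\Phi'$ multiplies $H^N$, not $H^{2N}$, so it cannot absorb this term. The only usable choice is therefore $\beta=0$, which gives the classical hypothesis $f'\le0$, not $\Phi'\le0$. Your guess $\beta=-1$ would not even make the first two terms proportional to $f'-f$.

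What is missing is a rank-one correction of the matrix. The paper applies Newton's inequality $S_2(W)\le\frac{N-1}{2N}(\operatorname{tr}W)^2$ to
\[
W=\nabla\hat a(\nabla v)=-e^{-\frac{N-1}{N}u}\Big(Da(\nabla u)-\tfrac{N-1}{N}\,a(\nabla u)\otimes\nabla u\Big),\qquad v=Ne^{-u/N},
\]
with weight $v^{1-N}$ (Lemmas \ref{lem1}--\ref{lem3}). The correction term produces exactly the $H^{2N}$ contributions that cancel. For a weight $v^{\gamma}$, the Newton step leaves a positive multiple of $(N-1+\gamma)^2v^{\gamma-2}\hat H^{2N}(\nabla v)$, which forces $\gamma=1-N$. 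Testing with $e^{-\frac{N-1}{N}u}f(u)$ then leaves $\frac{N-1}{N}\int_\Omega e^{u/N}H^N(\nabla u)\Phi'(u)\,dx\ge\int_{\partial\Omega}e^{-\frac{N-1}{N}u}\Pi_x(a^T,a^T)\,d\sigma\ge0$.

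The exponent of the substitution matters as well. With $v=e^{-u/k}$, the same computation yields the hypothesis $\big(e^{-Nt/k}f(t)\big)'\le0$. Your choice $k=N-1$ would therefore require $\big(e^{-\frac{N}{N-1}t}f(t)\big)'\le0$ rather than $\Phi'\le0$. The value $k=N$ is the one for which the Liouville bubbles are exactly the equality case $W=\lambda I$.

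Accordingly, the equality case reads $\hat a(\nabla v)=\lambda x+b$, and the Neumann condition must kill both constants. It kills $\lambda$ because $\int_\Omega\operatorname{div}\hat a(\nabla v)\,dx=0$, and it kills $b$ because $\int_{\partial\Omega}(b\cdot x)(b\cdot\nu)\,d\sigma=|b|^2|\Omega|$. Your endgame loses the constant vector $b$.

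Smaller points:
\begin{itemize}
\item The Neumann boundary term is $\Pi_x(a^T,a^T)$; your $\Pi(\nabla_TH,\nabla_Tu)$ has no sign in the anisotropic case.
\item $Da(\nabla u)$ has real but not necessarily nonnegative eigenvalues; real eigenvalues are all that Cauchy--Schwarz needs.
\item In Step 1, $\Phi'\le0$ gives only one-sided bounds on $f$ (e.g.\ $f(t)=-e^{t+t^3}$ is admissible), so $f(u)\in L^q$ does not follow. The truncation argument has to use $f(t)\le\Phi(0)e^{t}$ for $t\ge0$ and $f(t)\ge\Phi(0)e^{t}$ for $t\le0$ directly.
\end{itemize}
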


As a consequence of Theorem \ref{Thm1}, we have the following Corollary.

\begin{cor}\label{Thm5}
    Let $\Omega$ be a smooth bounded convex domain, let $H$ be a $C^2$ strictly convex norm, and let $f\in C^1(\R)$ satisfy \eqref{Thm1-assumption}.  If $u$ is a weak solution of
    \begin{align}\label{Thm5-equ}
        \begin{cases}
            \operatorname{div}(a(\nabla u))+f(u)=0, & \text{in }\Omega,\\
            |\nabla u|=0, & \text{on }\partial\Omega,
        \end{cases}
    \end{align}
    then $u$ is constant.
\end{cor}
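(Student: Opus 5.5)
The plan is to reduce Corollary \ref{Thm5} directly to Theorem \ref{Thm1}. The only real step is to show that a weak solution of \eqref{Thm5-equ} is also a weak solution of the Neumann problem \eqref{Thm1-equ}, in the sense of Definition \ref{def:weak-solution-main} with $h\equiv 0$. Once that is done, Theorem \ref{Thm1} applies verbatim: $\Omega$ is smooth, hence $C^2$, bounded and convex; we take it to be connected, as in Theorem \ref{Thm1}. $H$ is a $C^2$ strictly convex norm and $f$ satisfies \eqref{Thm1-assumption}, so $u$ is constant.

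\textbf{Passing from the gradient condition to the conormal condition.} $H$ is a norm, so it is positively $1$-homogeneous and comparable to the Euclidean norm, $c|\xi|\le H(\xi)\le C|\xi|$. Its gradient $\nabla H$ is $0$-homogeneous and therefore bounded on $\R^N\setminus\{0\}$. Hence
$$|a(\xi)|=H^{N-1}(\xi)|\nabla H(\xi)|\le C'|\xi|^{N-1},$$
and $a(\xi)\to 0$ as $\xi\to 0$; we set $a(0)=0$. Consequently $|\nabla u|=0$ on $\partial\Omega$ forces $a(\nabla u)=0$, and in particular $a(\nabla u)\cdot\nu=0$ on $\partial\Omega$.

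\textbf{The main obstacle: what ``weak solution'' means here.} The boundary condition $|\nabla u|=0$ is not part of Definition \ref{def:weak-solution-main}, so we must fix how it is read. We interpret a weak solution of \eqref{Thm5-equ} as $u\in W^{1,N}(\Omega)$ with $f(u)\in L^1(\Omega)$ satisfying
$$\int_\Omega a(\nabla u)\cdot\nabla\phi\,dx=\int_\Omega f(u)\phi\,dx\qquad\text{for all }\phi\in C^1(\overline\Omega),$$
where the boundary flux $a(\nabla u)\cdot\nu$ that would appear from integrating by parts against a general test function vanishes because $a(\nabla u)=0$ on $\partial\Omega$. This identity is exactly \eqref{weak-form} with $h\equiv0$, which is the weak formulation of \eqref{Thm1-equ}.

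If instead one only assumes the equation tested against compactly supported $\phi$, together with $\nabla u=0$ on $\partial\Omega$ in the trace sense, the extension to all $\phi\in C^1(\overline\Omega)$ needs an approximation argument. Take cutoffs $\eta_\varepsilon$ equal to $1$ away from an $\varepsilon$-neighbourhood of $\partial\Omega$, with $|\nabla\eta_\varepsilon|\lesssim\varepsilon^{-1}$. The error term is
$$\int_\Omega \phi\, a(\nabla u)\cdot\nabla\eta_\varepsilon\,dx,$$
which converges to $\int_{\partial\Omega}\phi\,a(\nabla u)\cdot\nu\,d\sigma$ whenever $a(\nabla u)$ has a normal trace, for instance if $u\in C^1(\overline\Omega)$ or $a(\nabla u)$ lies in a divergence-$L^1$ space. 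That normal trace is $0$ by the previous step.

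Under either reading, the hypotheses of Theorem \ref{Thm1} hold, and the conclusion that $u$ is constant follows.
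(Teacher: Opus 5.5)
Your reduction is correct and matches how the paper obtains Corollary \ref{Thm5}, which it states without separate proof as a direct consequence of Theorem \ref{Thm1}: since $|a(\xi)|\le C|\xi|^{N-1}$, the condition $|\nabla u|=0$ on $\partial\Omega$ gives $a(\nabla u)\cdot\nu=0$, so $u$ is a weak solution of the Neumann problem \eqref{Thm1-equ} and Theorem \ref{Thm1} applies. One minor remark: connectedness need not be taken as an extra assumption, since every convex set is connected.
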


Next, we consider the case when $h \neq 0$. In this case, we will deal with a boundary integral involving the second derivative of $u$. However, the regularity theory for anisotropic $N$-Laplacian equations does not guarantee $C^2$ regularity up to the boundary, due to the possible degeneracy at critical points. Consequently, the second-order boundary quantities appearing in our integral argument are not automatically well-defined for an arbitrary weak solution.

The following definition is therefore introduced as a technical admissibility condition ensuring that the regularization procedure and the boundary identities used in this paper are rigorously justified. It is not intended as a new intrinsic notion of solution, but rather as a convenient framework for the method developed below.

\begin{defn}\label{def:classicalRA}
A solution $u$ of \eqref{Main} is called a classical solution if $u\in C^2(\Omega)\cap C^1(\overline\Omega)$, $a(\nabla u)\in C^1(\overline\Omega;\R^N)$, and \eqref{Main} holds pointwise.
\end{defn}

\begin{defn}\label{def:weakRA}
A weak solution $u$ of \eqref{Main} with $u \in W^{1,N}(\Omega)\cap C^1(\overline\Omega) $ is called an admissible weak solution if there exist smooth uniformly elliptic regularizations $a_j,\ f_j,\ h_j$, and the corresponding classical solutions $u_j$ of \eqref{Main} on $\Omega$ with $a,f,h$ been replaced by $a_j,f_j,h_j$,  such that
\[
    u_j\longrightarrow u \ \ \text{in} \ \ C^1(\overline\Omega).
\]
The regularized data $a_j, f_j, h_j$ are assumed to converge locally uniformly to the original data $a,f,h$ on the bounded sets attained by $u_j$ and $\nabla u_j$.
\end{defn}

\begin{rmk}
If, in addition,
\[
    |\nabla u|\ge c_0>0 \ \ \text{on}\ \overline\Omega,
\]
and $H,f,h,\Omega$ are sufficiently smooth, then the
equation is uniformly elliptic along $u$. Standard boundary
regularity theory yields
\[
    u\in C^{2,\gamma}(\overline\Omega), \qquad a(\nabla u) \in C^{1,\gamma}(\overline\Omega;\mathbb R^N).
\]
In this case, the solution $u$ is classical.
\end{rmk}

In order to prove our main theorem, we need the following Proposition.

\begin{prop}\label{Prop2}
Let $u$ be a classical solution of \eqref{Main}. Then
\begin{align}\label{prop2-corrected}
        \frac{N-1}{N}\int_{\Omega}e^{u/N}H^N(\nabla u)\Phi'(u)\,dx
        \ge \int_{\partial\Omega}B_\Omega[u]d\sigma,
\end{align}
where $B_\Omega[u]$ is defined as
\begin{align}\label{eq:BOmega}
    B_{\Omega}[u] = \e^{-\frac{(N-1)u}{N}} \bigg( & \frac{(N-1)^2}{N^2} H^N(\nabla u) a(\nabla u) \cdot \nu - \nabla (a(\nabla u) \cdot \nu) a(\nabla u ) \nonumber \\
    & - \frac{1}{N} a(\nabla u) \cdot \nu f(u) + \Pi_x(a^T(\nabla u), a^T(\nabla u) ) \bigg),
\end{align}
with $a^T(\nabla u) = a(\nabla u) - (a(\nabla u) \cdot  \nu) \nu$.
If equality holds in \eqref{prop2-corrected}, then either
    \begin{align}\label{affine}
        u(x) = N \log N - N \log (\ell \cdot x + c),
    \end{align}
    with $\ell \in \R^N, c \in \R$ chosen such that $\ell \cdot x + c > 0 $ in $\Omega$, or
    \begin{align}\label{bubble-form-intro}
        u(x) = N \log N - N \log (c_1 + c_2 H_0(x_0-x)^{\frac{N}{N-1}}),
    \end{align}
    with $c_2 \neq 0$, $x_0 \in \R^N$ and $c_1, c_2$ be chosen such that $c_1 + c_2 H_0(x_0-x)^{\frac{N}{N-1}} $ in $\Omega$.
\end{prop}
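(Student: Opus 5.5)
We transform the problem through the substitution $v=\e^{-u/N}$, which is the $p=N$ counterpart of the power change of variables used by Ciraolo et al. \cite{CCR2021} for $1<p<N$. Since $\nabla u=-N\nabla v/v$ and $a$ is $(N-1)$-homogeneous, the equation becomes
\[
\operatorname{div}(a(\nabla v))=\frac{N-1}{v}H^N(\nabla v)+\frac{v^{N-1}}{N^{N-1}}f(-N\log v).
\]
The target functional is a weighted quantity built from $W:=\nabla a(\nabla u)$, which is diagonalizable with real eigenvalues because $\nabla^2 H^N/N$ is symmetric positive definite. Concretely, we consider the traceless part
\[
E:=W-\frac{\operatorname{tr}W}{N}I.
\]
The anisotropic Newton-type inequality gives $\operatorname{tr}(W^2)-\frac{(\operatorname{tr}W)^2}{N}\ge 0$, with equality iff $W$ is a multiple of the identity. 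We therefore integrate
\[
\e^{-\frac{(N-1)u}{N}}\Bigl(\operatorname{tr}(W^2)-\tfrac{1}{N}(\operatorname{div}a(\nabla u))^2\Bigr)\ge 0
\]
over $\Omega$, initially on $\{\nabla u\neq0\}$ or after a regularization $a_\varepsilon$ justified by the classical-solution assumption $a(\nabla u)\in C^1(\overline\Omega)$.

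The main computation uses the differential identity
\[
\operatorname{tr}(W^2)-(\operatorname{tr}W)^2=\operatorname{div}\bigl(W a(\nabla u)-(\operatorname{tr}W)a(\nabla u)\bigr)
\]
applied to the vector field $a(\nabla u)$ (Reilly-type identity). Multiplying by the weight $\e^{-(N-1)u/N}$ and integrating by parts produces three kinds of terms:
\begin{itemize}
\item boundary terms $\e^{-(N-1)u/N}\bigl(\nabla(a\cdot\nu)\cdot a\ldots\bigr)$;
\item gradient-of-weight terms $-\frac{N-1}{N}\e^{-(N-1)u/N}\bigl(W a\cdot\nabla u-\operatorname{tr}W\,a\cdot\nabla u\bigr)$;
\item the remaining $\frac{N-1}{N}(\operatorname{tr}W)^2$ part.
\end{itemize}
We substitute $\operatorname{tr}W=-f(u)$. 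We also use $a\cdot\nabla u=H^N(\nabla u)$ and $Wa\cdot\nabla u=\nabla(H^N(\nabla u))\cdot a/N\cdot\ldots$, which reduces to $\frac1N a\cdot\nabla H^N$ by homogeneity. The term $a\cdot\nabla(H^N)$ is integrated by parts a second time, using the weight and the equation. Collecting everything, the interior terms should combine into
\[
\tfrac{N-1}{N}\e^{u/N}H^N\Phi'(u)=\tfrac{N-1}{N}\e^{-(N-1)u/N}H^N\bigl(f'(u)-f(u)\bigr),
\]
with coefficients fixed so that the quadratic $(N-1)^2/N^2\,H^{2N}$-type terms cancel. This cancellation is exactly where the exponent $-(N-1)/N$ is chosen.

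On $\partial\Omega$, we split $a=a^T+(a\cdot\nu)\nu$. The standard decomposition gives
\[
(Wa-\operatorname{tr}W\,a)\cdot\nu=\nabla(a\cdot\nu)\cdot a-\Pi(a^T,a^T)-\operatorname{div}_{\partial\Omega}\bigl((a\cdot\nu)a^T\bigr)+\ldots
\]
The tangential divergence is absorbed by integrating the weight, and this yields the $\frac{(N-1)^2}{N^2}H^N a\cdot\nu$ and $\frac1N(a\cdot\nu)f$ terms of \eqref{eq:BOmega}. After this step, nonnegativity of the traceless integral gives \eqref{prop2-corrected}.

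I expect the main obstacle to be the bookkeeping of the interior terms, namely verifying that the chosen weight produces exactly $\Phi'(u)$ with no residual sign-indefinite term. A second difficulty is the justification near critical points where $W$ is only defined through $a(\nabla u)\in C^1$. For the latter, I would first work with $\nabla a(\nabla u)$ directly, since it exists by assumption, and use symmetry after writing $W=\nabla^2(H^N/N)(\nabla u)\nabla^2u$ on $\{\nabla u\neq0\}$, where $\nabla u\neq 0$ means $u\in C^2$.

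For the equality case, equality forces $W=\lambda(x)I$ a.e. Then $\nabla a(\nabla v)=\mu I$ as well for the transformed $v$, and the weighted structure forces $\mu$ to be constant. This gives $a(\nabla v)=\mu(x-x_0)$ or $a(\nabla v)$ constant. In the constant case $v$ is affine, which gives \eqref{affine}. In the other case, inverting via $\nabla H_0$ gives $v=c_1+c_2H_0(x_0-x)^{N/(N-1)}$, which yields \eqref{bubble-form-intro} after taking $u=-N\log v$ with the normalization constant $N\log N$.
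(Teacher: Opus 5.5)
Your argument breaks at the choice of matrix in the Newton inequality. You apply $\operatorname{tr}(W^2)\ge\frac1N(\operatorname{tr}W)^2$ to $W=\nabla a(\nabla u)$ with weight $w=e^{-\frac{(N-1)u}{N}}$, and this cannot produce \eqref{prop2-corrected}. Carry out your own bookkeeping: Euler's relation gives $\nabla u\cdot Wa=\frac{N-1}{N}\,a\cdot\nabla H^N(\nabla u)$ (not $\frac1N a\cdot\nabla H^N$); then integrate $w\,a\cdot\nabla H^N$ by parts and test the equation with $wf(u)$. You get exactly
\[
\int_\Omega w\Bigl(\operatorname{tr}(W^2)-\tfrac1N(\operatorname{tr}W)^2\Bigr)dx=\text{(boundary terms)}+\frac{N-1}{N}\int_\Omega wH^N(\nabla u)\bigl(f'(u)+f(u)\bigr)dx+\frac{(N-1)^3}{N^3}\int_\Omega wH^{2N}(\nabla u)\,dx .
\]
So the interior term involves $f'+f$ rather than $f'-f=e^{u}\Phi'$, and a nonnegative $H^{2N}$ term is left over. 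With a general weight $e^{-\gamma u}$ the $H^{2N}$ coefficient is $\frac{N-1}{N}\gamma^2$. Hence the cancellation you attribute to the choice $\gamma=\frac{N-1}{N}$ never happens. Only $\gamma=0$ removes it, and that yields the classical condition $f'\le0$, not $\Phi'\le0$.

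The missing idea is that the Newton inequality must be applied to the matrix of the transformed function. This is $\nabla\widehat a(\nabla v)$ with $v=Ne^{-u/N}$ and $\widehat H(\xi)=H(-\xi)$; the sign flip is needed since $\nabla u=-N\nabla v/v$. In terms of $u$ this matrix is $-w\widetilde W$, where $\widetilde W=\nabla a(\nabla u)-\frac{N-1}{N}a(\nabla u)\otimes\nabla u$. The rank-one correction adds the cross terms $-\frac{2(N-1)}{N}\nabla u\cdot Wa+\frac{2(N-1)}{N^2}H^N\operatorname{tr}W+\frac{(N-1)^3}{N^3}H^{2N}$. After one integration by parts these give $-\frac{2(N-1)}{N}\int_\Omega wH^Nf-\frac{(N-1)^3}{N^3}\int_\Omega wH^{2N}$ plus boundary terms. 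That is exactly what turns $f'+f$ into $f'-f$ and kills the $H^{2N}$ term. The paper obtains this from Lemma \ref{lem1} applied to $v$ with $p=N$ (that is, Lemma \ref{lem2}), before invoking $S_2(W)\le\frac{N-1}{2N}\operatorname{tr}(W)^2$ for $W=\nabla\widehat a(\nabla v)$. You introduce $v$ but never actually use it.

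The same confusion invalidates your equality case. The matrices $\nabla a(\nabla u)$ and $\nabla\widehat a(\nabla v)$ differ by a nonzero rank-one matrix wherever $\nabla u\neq0$, so one being a multiple of $I$ does not make the other one so. Equality for your $W$ would force $a(\nabla u)=\lambda x+b$. That makes $u$ itself affine or of the form $c_1+c_2H_0(\pm(x-x_0))^{\frac{N}{N-1}}$, not the logarithmic profiles \eqref{affine} and \eqref{bubble-form-intro}.
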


By virtue of Proposition \ref{Prop2}, we can prove a more general rigidity result (i.e.  classification theorem) with Robin boundary condition.

\begin{thm}\label{Thm3}
    Let $\Omega\subset\R^N$ be bounded, connected, convex and of class $C^2$.  Let $f,h\in C^1(\R)$ and assume \eqref{Thm1-assumption} holds.  Let $u$ be an admissible weak solution of \eqref{Main}.  If $u$ satisfies the sign condition
    \begin{align}\label{B-Omega}
        \int_{\pa \Omega} B_{\Omega}[u] \ \text{d} \sigma \geq 0,
    \end{align}
    where $B_{\Omega}[u]$ is defined in \eqref{eq:BOmega}. Then either
    \begin{align}\label{linear-log}
        u(x) = N \log N - N \log (\ell \cdot x + c),
    \end{align}
    with $\ell \in \R^N, c \in \R$ chosen such that $\ell \cdot x + c > 0 $ in $\Omega$, or
    \begin{align}\label{bubble-log}
        u(x) = N \log N - N \log (c_1 + c_2 H_0( x_0 -x )^{\frac{N}{N-1}}),
    \end{align}
    with $c_2 \neq 0$, $x_0 \in \R^N$ and $c_1, c_2$ be chosen such that $c_1 + c_2 H_0(x_0-x)^{\frac{N}{N-1}} $ in $\Omega$.
\end{thm}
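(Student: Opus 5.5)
The plan is to derive Theorem \ref{Thm3} from Proposition \ref{Prop2} by regularization. Since $u$ is an admissible weak solution, we have smooth uniformly elliptic data $a_j,f_j,h_j$ and classical solutions $u_j$ of the regularized problems, with $u_j\to u$ in $C^1(\overline\Omega)$. Proposition \ref{Prop2} is stated for the exact operator $a$, so we first apply its underlying integral identity to each $u_j$. The regularized version reads
\begin{align*}
\frac{N-1}{N}\int_\Omega e^{u_j/N}H_j^N(\nabla u_j)\Phi_j'(u_j)\,dx \;-\;\int_{\partial\Omega}B^{(j)}_\Omega[u_j]\,d\sigma \;=\; \int_\Omega \mathcal Q_j\,dx + \mathcal E_j ,
\end{align*}
with the following ingredients:
\begin{itemize}
\item $\mathcal Q_j\ge 0$ is the nonnegative ``traceless Hessian'' type quantity produced by the anisotropic Bochner/P-function computation with weight $e^{-(N-1)u/N}$;
\item $\mathcal E_j\to0$ collects the error terms coming from $a_j\ne a$.
\end{itemize}

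Next we pass to the limit. $C^1$ convergence together with local uniform convergence of the data gives convergence of the interior integral to $\frac{N-1}{N}\int_\Omega e^{u/N}H^N(\nabla u)\Phi'(u)\,dx$. On the boundary, the term $\nabla(a_j(\nabla u_j)\cdot\nu)\,a_j(\nabla u_j)$ is handled through the boundary condition. Since $a_j(\nabla u_j)\cdot\nu=-h_j(u_j)$ on $\partial\Omega$, only the tangential derivative matters: $a^T\cdot\nabla_T(-h_j(u_j))=-h_j'(u_j)\,a^T\cdot\nabla u_j$, together with the normal part. I would rewrite the normal-derivative piece via the equation so that only first-order quantities remain, and these converge under $C^1$ convergence. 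The $\Pi_x$ term and the $f$ term converge directly. Thus $\int_{\partial\Omega}B^{(j)}_\Omega[u_j]\,d\sigma\to\int_{\partial\Omega}B_\Omega[u]\,d\sigma$.

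Combining the limit with \eqref{Thm1-assumption} and \eqref{B-Omega} gives
\[0\ge \frac{N-1}{N}\int_\Omega e^{u/N}H^N(\nabla u)\Phi'(u)\,dx\ge \limsup_j\int_{\partial\Omega}B_\Omega[u_j]\ge 0.\]
So all these quantities vanish, and $\int_\Omega \mathcal Q_j\to0$.

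It remains to show that $u$ has the form \eqref{linear-log} or \eqref{bubble-log}. In the equality case of Proposition \ref{Prop2}, $\mathcal Q=0$ forces the matrix $\nabla(a(\nabla u)e^{-(N-1)u/N})$ (or the corresponding one for $v=e^{-u/N}$, where $u=N\log N-N\log(\cdot)$ is suggestive) to be a multiple of the identity. To use this, we pass to the limit weakly. We take $w_j = N e^{-u_j/N}$ (times a constant), note $W_j := a_j(\nabla w_j)$ is bounded in $C^0$, and use $\int\mathcal Q_j\to0$ to get that the distributional derivative of $W=a(\nabla w)$ satisfies $DW=\lambda I$ with $\lambda$ a function. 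Since $DW$ is symmetric-curl-free and a multiple of the identity, $W$ is a conformal Killing-type field. In dimension $N\ge2$ this forces $\lambda$ to be constant (for $N\ge3$ by Liouville; for $N=2$ one additionally uses that $W$ is a gradient-type field from $a=\nabla(H^N/N)$). Hence $W=\lambda(x-x_0)$ or $W$ is constant. Inverting $a$ via the dual norm $H_0$ then yields $\nabla w$ constant, giving \eqref{linear-log}, or $w=c_1+c_2H_0(x_0-x)^{N/(N-1)}$, giving \eqref{bubble-log}.

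The main obstacle is the limit step for the boundary term and for $\mathcal Q_j$, since second derivatives of $u_j$ are not controlled uniformly. I would first try to express every second-order boundary term in divergence/tangential form using the Robin condition, and to use lower semicontinuity for $\mathcal Q_j$ in a weak formulation for $W$.
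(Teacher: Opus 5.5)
Your skeleton is the paper's: the inequality of Proposition \ref{Prop2}, then $\Phi'\le0$ and \eqref{B-Omega} force equality, then classify. The difference is that you prove the inequality only for the regularized solutions $u_j$ and then try to carry the \emph{equality case} through the limit $j\to\infty$. That step has a genuine gap, for two reasons.

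First, $\mathcal E_j\to0$ is asserted rather than proved, and it is not harmless. A smooth uniformly elliptic $a_j$ is in general not $(N-1)$-homogeneous (for $N\ge3$ it cannot be). So $v_j=Ne^{-u_j/N}$ does not satisfy an equation of the form \eqref{v-equation}, and the second identity of Lemma \ref{lem1} (which needs $V=\hat H^N/N$) is unavailable. The resulting discrepancy terms involve $D^2u_j$, which $C^1$ convergence does not control.

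Second, $\int_\Omega\mathcal Q_j\,dx\to0$ does not give $DW=\lambda I$ for the limit by lower semicontinuity. The Newton defect $\frac{N-1}{2N}\operatorname{tr}(W)^2-S_2(W)=\frac12\bigl(\operatorname{tr}(W^2)-\frac1N\operatorname{tr}(W)^2\bigr)$ is not convex, and not even nonnegative, on general matrices, because $\operatorname{tr}(W^2)$ counts the skew part with a minus sign. Meanwhile $W_j=D^2V(\nabla v_j)D^2v_j$ is not symmetric. Even weak $L^2$ compactness of $W_j$ would need a uniform $W^{1,2}$ bound on $a_j(\nabla u_j)$, which you do not establish.

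Closing your route would require three things: uniform bounds of that kind (in the spirit of Proposition \ref{prop2-3}); a quantitative form of \eqref{Newton} bounding $|W_j-\frac1N\operatorname{tr}(W_j)I|^2$ by the defect, which needs uniform control of the condition number of $D^2V_j(\nabla v_j)$; and an actual estimate for $\mathcal E_j$. Separately, passing to the limit in $\Phi_j'(u_j)$ and in the $h_j'(u_j)$ term of the reduced boundary integral needs $C^1$ convergence of $f_j,h_j$, not merely the locally uniform convergence in the definition of admissibility.

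The paper never transfers the equality case through a limit. The interior identity of Lemma \ref{lem2} holds for $u$ itself, because $a(\nabla u)\in W^{1,2}(\Omega)$ (Proposition \ref{prop2-3}) gives $W=\nabla\hat a(\nabla v)\in L^2$. Admissibility is used only to give meaning to $\int_{\pa\Omega}B_\Omega[u]\,d\sigma$, through the first-order expression of Lemma \ref{lem:boundary-reduction}. Hence the inequality of Proposition \ref{Prop2} holds for $u$, and the difference of its two sides is a positive multiple of $\int_\Omega v^{1-N}\bigl(\frac{N-1}{2N}\operatorname{tr}(W)^2-S_2(W)\bigr)\,dx$ for $u$'s own $W$. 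Then $\Phi'\le0$ together with \eqref{B-Omega} forces this defect to vanish a.e., i.e. $W=\lambda(x)I$ a.e., and Lemma \ref{lem:equality-profile} finishes.

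Note also that $\lambda$ is constant for every $N\ge2$ by the elementary computation $\pa_j\lambda=\pa_i\bigl(\pa_j\hat a_i(\nabla v)\bigr)=0$ for $i\ne j$. No Liouville theorem for conformal fields is needed. Your justification for $N=2$, that $a(\nabla w)$ is a gradient field, is not correct in general.
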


Then, a natural question is: under what conditions does
\eqref{B-Omega} hold? It is worth mentioning that when $\Omega$ is convex, then the first and fourth term in \eqref{eq:BOmega} are nonnegative. The main difficulty comes from the second term, which involves $\nabla (a(\nabla u) \cdot \nu)$. The Robin boundary condition gives $a(\nabla u) \cdot \nu = -h(u)$ on $\pa \Omega$. However, this identity holds only on the boundary and can therefore be differential only in the tangential direction $\tau$. More precisely, on $\partial\Omega$, we have
\begin{align*}
    \nabla  = \pa_{\nu} \cdot \nu+ \nabla_{\pa \Omega} \cdot \tau,
\end{align*}
where $\nabla_{\pa \Omega}$ is the derivative along $\pa \Omega$. Thus, the Robin condition gives
\[
    \nabla_{\partial\Omega} \bigl(a(\nabla u)\cdot\nu\bigr) = -h'(u)\nabla_{\partial\Omega}u,
\]
but it does not determine the normal derivative $ \partial_\nu\bigl(a(\nabla u)\cdot\nu\bigr)$. In Section 4, we use the decomposition \eqref{boundary-div-decomposition} to express this normal derivative
in terms of $\operatorname{div}a(\nabla u)$, the tangential
divergence, and the mean curvature $\mathcal H_{\partial\Omega}$. Combining this decomposition with
integration by parts on $\partial\Omega$, we obtain an equivalent expression for $\int_{\partial\Omega} B_\Omega[u]\ \text{d}\sigma$, see Lemma \ref{lem:boundary-reduction} for more detail. This representation then allows us to give some sufficient conditions on $H$, $f$, and $h$, under which \eqref{B-Omega} holds.

\begin{cor}\label{cor4}
    Let the assumptions of Theorem \ref{Thm3} hold. Suppose one of the following conditions holds:
    \begin{enumerate}[label=(\roman*)]
        \item Let $H_{\pa \Omega}(x)$ be the mean curvature at $x \in \pa \Omega$ and $H_* = \min_{x \in \pa \Omega} H_{\pa \Omega}(x)$, with the following conditions hold:
        \begin{align*}
               h(s) \leq 0, \ \ 2 h'(s) = \frac{N-1}{N}h(s), \ \ (N-1)H_* h(s)^2 - \frac{N-1}{N} h(s) f(s) \geq 0.
        \end{align*}
        \item Suppose $H(\xi)=|\xi|$ with the following conditions hold:
         \begin{align*}
             & h(s) \leq 0, \ \ 2 h'(s) - \frac{(N-1)(2N-1)}{N^2}h(s) \geq 0, \\
             & (N-1)H_* h(s)^2 - \frac{N-1}{N} h(s) f(s) \geq 0.
        \end{align*}
        \item Suppose $H(\xi)=|\xi|$ with the following conditions hold:
         \begin{align*}
             & h(s) \geq 0, \ \ \ 2 h'(s) - \frac{(N-1)(2N-1)}{N^2}h(s) \geq 0, \\
             & (N-1)H_* h(s)^2 - \frac{N-1}{N} h(s) f(s) - \frac{(N-1)^2}{N^2} h(s)^{\frac{2N-1}{N-1}} \geq 0.
        \end{align*}
    \end{enumerate}
    Then the boundary condition \eqref{B-Omega} is satisfied.
\end{cor}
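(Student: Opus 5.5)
We will compute $\int_{\partial\Omega}B_\Omega[u]\,d\sigma$ explicitly using the Robin condition and show it is nonnegative under each of (i)--(iii). The natural route is the reduction promised in Lemma \ref{lem:boundary-reduction}. On $\partial\Omega$ we set $g=a(\nabla u)\cdot\nu=-h(u)$ and split $a(\nabla u)=g\nu+a^T$. Then the second term of \eqref{eq:BOmega} becomes
\[
\nabla g\cdot a(\nabla u)=g\,\partial_\nu g+\nabla_{\partial\Omega}g\cdot a^T=g\,\partial_\nu g-h'(u)\nabla_{\partial\Omega}u\cdot a^T .
\]
For the normal derivative we use the decomposition \eqref{boundary-div-decomposition}:
\[
\partial_\nu g=\operatorname{div}a(\nabla u)-\operatorname{div}_{\partial\Omega}a^T-\mathcal H_{\partial\Omega}\,g ,
\]
with a curvature correction absorbed according to the sign convention. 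Here $\operatorname{div}a(\nabla u)=-f(u)$. Substituting, the quantity $-g\,\partial_\nu g$ produces three pieces:
\[
-h(u)f(u), \qquad -\mathcal H_{\partial\Omega}h(u)^2 \ \text{(with the appropriate sign, giving a positive contribution)}, \qquad -h(u)\operatorname{div}_{\partial\Omega}a^T .
\]
The last piece we integrate by parts on the closed manifold $\partial\Omega$ against the weight $e^{-(N-1)u/N}$. Since $\nabla_{\partial\Omega}$ of $e^{-(N-1)u/N}h(u)$ equals
\[
e^{-(N-1)u/N}\Bigl(h'(u)-\tfrac{N-1}{N}h(u)\Bigr)\nabla_{\partial\Omega}u ,
\]
all tangential terms collect into
\[
\int_{\partial\Omega}e^{-(N-1)u/N}\Bigl(2h'(u)-\tfrac{N-1}{N}h(u)\Bigr)\nabla_{\partial\Omega}u\cdot a^T\,d\sigma .
\]
The curvature term and the $f$ term combine, together with the $-\frac1N g f=\frac1N hf$ term, into the expression
\[
(N-1)\mathcal H_{\partial\Omega}h^2-\tfrac{N-1}{N}hf ,
\]
up to the precise bookkeeping of the factor $N-1$ coming from the weight. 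The remaining terms are $\frac{(N-1)^2}{N^2}H^N(\nabla u)\,g$ and $\Pi(a^T,a^T)\ge0$, the latter by convexity.

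Next we check the three cases.

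In case (i) the tangential coefficient $2h'-\frac{N-1}{N}h$ vanishes identically, so the mixed term disappears. The term $\frac{(N-1)^2}{N^2}H^Ng=-\frac{(N-1)^2}{N^2}H^Nh$ is nonnegative because $h\le0$. The curvature/$f$ term is bounded below via $\mathcal H_{\partial\Omega}\ge H_*$, which gives nonnegativity pointwise.

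In cases (ii)--(iii) we have $H=|\cdot|$, so $a^T=|\nabla u|^{N-2}\nabla_{\partial\Omega}u$. The mixed term is then
\[
\Bigl(2h'-\tfrac{N-1}{N}h\Bigr)|\nabla u|^{N-2}|\nabla_{\partial\Omega}u|^2 .
\]
The stated condition uses the constant $\frac{(N-1)(2N-1)}{N^2}$ rather than $\frac{N-1}{N}$. The discrepancy $\frac{(N-1)^2}{N^2}h$ must therefore be absorbed by part of the $H^N g$ term, using
\[
|\nabla u|^N=|\nabla u|^{N-2}\bigl(|\nabla_{\partial\Omega}u|^2+(\partial_\nu u)^2\bigr).
\]
This gives
\[
\tfrac{(N-1)^2}{N^2}|\nabla u|^N g=-\tfrac{(N-1)^2}{N^2}h|\nabla u|^{N-2}|\nabla_{\partial\Omega}u|^2+\text{normal part}.
\]
Adding this to the mixed term yields the coefficient $2h'-\frac{(N-1)(2N-1)}{N^2}h\ge0$.

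The leftover normal part is $-\frac{(N-1)^2}{N^2}h|\nabla u|^{N-2}(\partial_\nu u)^2$.
\begin{itemize}
\item In case (ii) it is nonnegative since $h\le0$.
\item In case (iii) we use the Robin condition $|\nabla u|^{N-2}\partial_\nu u=-h$ to bound it. From $|\partial_\nu u|^{N-1}\le|\nabla u|^{N-2}|\partial_\nu u|=h$ we get
\[
|\nabla u|^{N-2}(\partial_\nu u)^2=h\,|\partial_\nu u|\le h^{\frac{N}{N-1}} ,
\]
so the normal part is at least $-\frac{(N-1)^2}{N^2}h^{\frac{2N-1}{N-1}}$. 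This is exactly the extra term in the third condition of (iii).
\end{itemize}

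The main obstacle is getting the reduction formula exactly right. This means the sign and normalization of $\mathcal H_{\partial\Omega}$ in the divergence decomposition, and the precise coefficients (the factors $2$ and $N-1$) produced by the weighted tangential integration by parts. The hypotheses were clearly tuned to those constants, so I would derive Lemma \ref{lem:boundary-reduction} carefully first, and then match the three cases term by term as above.
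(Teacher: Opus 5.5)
Your proposal is correct and follows the paper's route. You reduce $\int_{\partial\Omega}B_\Omega[u]\,d\sigma$ through Lemma \ref{lem:boundary-reduction} (normal-flux decomposition plus weighted tangential integration by parts). In the isotropic cases you rewrite $|\nabla u|^N=\Lambda^{N-2}(|\tau|^2+\rho^2)$ to shift the mixed coefficient to $2h'-\frac{(N-1)(2N-1)}{N^2}h$, and for $h\ge0$ you use $\Lambda^{N-2}\rho^2=h|\rho|\le h^{N/(N-1)}$. This is exactly the argument of Propositions \ref{cor:simple-RRC}, \ref{prop:isotropic-negative-h} and \ref{prop:isotropic-positive-h}.

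One bookkeeping correction: the curvature term enters as $+(N-1)\mathcal{H}_{\partial\Omega}h^2$. The factor $N-1$ comes from the normalization $\mathcal{H}_{\partial\Omega}=\frac{1}{N-1}\operatorname{div}_\Gamma\nu$ in the divergence decomposition, not from the weight $e^{-(N-1)u/N}$.
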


Finally, we state the corresponding result on an unbounded convex domain.

\begin{thm}\label{thm:unbounded}
Let $\Omega\subset\mathbb R^N$ be connected, convex, $C^2$, and unbounded. Let $u$ be a weak solution of
\eqref{Main}, which locally satisfies the condition of admissible weak solution. Assume further  there is a sequence of bounded convex domains $\{D_j\}$ converges to $\Omega$ and
\begin{align}\label{unbounded-assumption}
    \int_\Omega e^{u/N}H^N(\nabla u)|\Phi'(u)|\,dx<\infty, \ \ \int_{\pa D_j} \overline{B}_{\Omega}[u] \ \text{d} \sigma \geq 0,
\end{align}
where
\begin{align}\label{eq:overlineBOmega}
    B_{\Omega}[u] = \e^{-\frac{(N-1)u}{N}} \bigg( & \frac{(N-1)^2}{N^2} H^N(\nabla u) a(\nabla u) \cdot \eta_j - \nabla (a(\nabla u) \cdot \eta_j) a(\nabla u )  - \frac{1}{N} a(\nabla u) \cdot \eta_j f(u) \nonumber \\
    & + \Pi_x(a(\nabla u) - (a(\nabla u) \cdot  \eta_j) \eta_j, a(\nabla u) - (a(\nabla u) \cdot  \eta_j) \eta_j)) \bigg),
\end{align}
and $\eta_j$ is the unit normal at $\pa D_j$. If $\Phi'\le0$, Then either
    \begin{align}\label{linear-log+}
        u(x) = N \log N - N \log (\ell \cdot x + c),
    \end{align}
    with $\ell \in \R^N, c \in \R$ chosen such that $\ell \cdot x + c > 0 $ in $\Omega$, or
    \begin{align}\label{bubble-log+}
        u(x) = N \log N - N \log (c_1 + c_2 H_0( x_0 -x )^{\frac{N}{N-1}}),
    \end{align}
    with $c_2 \neq 0$, $x_0 \in \R^N$ and $c_1, c_2$ be chosen such that $c_1 + c_2 H_0(x_0-x)^{\frac{N}{N-1}} $ in $\Omega$.
\end{thm}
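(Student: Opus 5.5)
The plan is to reduce the unbounded case to Proposition \ref{Prop2} applied on the exhausting bounded convex domains $D_j$, and then pass to the limit $j\to\infty$. Since $u$ locally satisfies the admissibility condition, on each $\overline{D_j}$ (or a slightly larger bounded convex neighbourhood) there are regularized data $a_k,f_k,h_k$ and classical solutions $u_k\to u$ in $C^1(\overline{D_j})$. For each $u_k$ we apply the integral identity underlying Proposition \ref{Prop2} on $D_j$ with outer normal $\eta_j$. This gives
\[
\frac{N-1}{N}\int_{D_j}e^{u_k/N}H^N(\nabla u_k)\Phi_k'(u_k)\,dx\ \ge\ \int_{\partial D_j}\overline{B}_{\Omega}[u_k]\,d\sigma ,
\]
modulo the regularization error terms. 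Here the difference between the left and right sides is a nonnegative integral $\int_{D_j}\mathcal{Q}_k\,dx$, where $\mathcal{Q}_k$ is the squared-traceless-Hessian quantity (anisotropic Cauchy--Schwarz remainder) produced in the proof of Proposition \ref{Prop2}.

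Next we let $k\to\infty$ with $j$ fixed. The boundary integral passes to the limit only in a weak sense, since it contains $\nabla(a(\nabla u)\cdot\eta_j)$. I would handle this exactly as in the proof of Theorem \ref{Thm3}: rewrite the second-order boundary term through the divergence decomposition and tangential integration by parts, using the equation for $\operatorname{div}a(\nabla u)$, so that only first-order quantities of $u_k$ remain, and these converge by $C^1$ convergence. Fatou's lemma then gives
\[
\int_{D_j}\mathcal{Q}\,dx\ \le\ \frac{N-1}{N}\int_{D_j}e^{u/N}H^N(\nabla u)\Phi'(u)\,dx-\int_{\partial D_j}\overline{B}_{\Omega}[u]\,d\sigma\ \le\ \frac{N-1}{N}\int_{D_j}e^{u/N}H^N(\nabla u)\Phi'(u)\,dx ,
\]
where the last step uses the hypothesis \eqref{unbounded-assumption}. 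By $\Phi'\le0$ the right-hand side is $\le 0$, and the integrability assumption in \eqref{unbounded-assumption} lets us send $j\to\infty$ by dominated convergence. We obtain $\int_\Omega\mathcal{Q}\,dx=0$ and also $\int_\Omega e^{u/N}H^N(\nabla u)\Phi'(u)\,dx=0$.

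Vanishing of $\mathcal{Q}$ is exactly the equality case analysed in Proposition \ref{Prop2}, applied locally on each $D_j$. Concretely, with $v=e^{u/N}$ (up to normalization), the quantity $a(\nabla v)$ has Jacobian proportional to the identity on $\{\nabla u\ne0\}$. Hence $a(\nabla v)=\lambda x+b$ on connected pieces, with $\lambda$ constant by the equation. Two cases arise. If $\lambda=0$, then $v$ is affine, giving \eqref{linear-log+}. If $\lambda\neq0$, then $\nabla v\propto\nabla H_0^{N/(N-1)}(x-x_0)$, giving \eqref{bubble-log+}. Since the $D_j$ are nested and converge to $\Omega$, the parameters $\ell,c$ (resp.\ $c_1,c_2,x_0$) obtained on consecutive $D_j$ must coincide by unique continuation of these explicit forms. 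So a single formula holds on all of $\Omega$, by connectedness.

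I expect the main obstacle to be the double limit $k\to\infty$, $j\to\infty$ for the boundary terms on $\partial D_j$. The boundaries $\partial D_j$ are not part of $\partial\Omega$, so no boundary condition is available there. The hypothesis $\int_{\partial D_j}\overline{B}_\Omega[u]\ge0$ is stated for the limit $u$, not for $u_k$, so one must justify that $\int_{\partial D_j}\overline{B}_\Omega[u_k]\to\int_{\partial D_j}\overline{B}_\Omega[u]$. The first approach I would try is to choose the $D_j$ slightly inside regions where $u\in C^2$ (interior regularity away from critical points), and to treat the term $\nabla(a\cdot\eta_j)\cdot a$ by writing it as $\operatorname{div}a$ times the normal component plus tangential terms, all of which converge via $C^1$ convergence and the equation.
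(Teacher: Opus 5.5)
Your skeleton is the paper's: apply the inequality of Proposition \ref{Prop2} on each $D_j$, squeeze it between $\Phi'\le0$ and the assumed sign of $\int_{\partial D_j}\overline{B}_\Omega[u]\,d\sigma$, deduce equality in the Newton inequality, and invoke Lemma \ref{lem:equality-profile}. The difference is in how the inequality on $D_j$ is obtained. The paper simply asserts it for $u$ itself. You derive it for classical approximants $u_k$, and therefore need $\int_{\partial D_j}\overline{B}_\Omega[u_k]\,d\sigma\to\int_{\partial D_j}\overline{B}_\Omega[u]\,d\sigma$.

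You rightly single this out as the main obstacle, but the mechanism you propose does not work. The reduction in Lemma \ref{lem:boundary-reduction} eliminates second derivatives only because the Robin condition turns $\nabla_{\partial\Omega}(a(\nabla u)\cdot\nu)$ into $-h'(u)\nabla_{\partial\Omega}u$. On the interior surface $\partial D_j$ there is no boundary condition. Take $a=a_k(\nabla u_k)$ and $a^T=a-(a\cdot\eta_j)\eta_j$. The same divergence decomposition followed by tangential integration by parts then gives
\begin{align*}
\int_{\partial D_j}e^{-\frac{(N-1)u_k}{N}}\,\nabla(a\cdot\eta_j)\cdot a\,d\sigma
=2\int_{\partial D_j}e^{-\frac{(N-1)u_k}{N}}\,\nabla_{\partial D_j}(a\cdot\eta_j)\cdot a^T\,d\sigma+(\text{first-order terms}).
\end{align*}
So the tangential second-order term doubles rather than cancels; this doubling is exactly where the factor $2h'$ in \eqref{R-density} comes from. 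Convergence $u_k\to u$ in $C^1(\overline{D_j})$ gives no control of this term. Hence the hypothesis, which is stated for $u$, cannot be transferred to the inequality for $u_k$.

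Your fallback of placing $\partial D_j$ where $u\in C^2$ is not available. The $D_j$ are prescribed by the hypothesis, and their boundaries need not avoid the critical set of $u$. For the same reason, Fatou's lemma cannot be applied to $\mathcal{Q}_k$ as stated, since $C^1$ convergence gives no pointwise control of second derivatives.

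To close the gap you have two options. You can obtain the inequality for $u$ directly: apply the weak identity of Lemma \ref{lem2} with a cut-off $\zeta_\delta$ built from $D_j$ and let $\delta\to0$. This still presupposes that $\overline{B}_\Omega[u]$ has a trace on $\partial D_j$, as the hypothesis implicitly does. Alternatively, build approximants with prescribed flux $a_k(\nabla u_k)\cdot\eta_j=a(\nabla u)\cdot\eta_j$ on $\partial D_j$, so that the tangential term above involves a fixed function independent of $k$.

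Two further remarks. First, the limit $j\to\infty$ and the integrability assumption are unnecessary. For each fixed $j$ you already have $0\le\int_{D_j}\mathcal{Q}\,dx\le\frac{N-1}{N}\int_{D_j}e^{u/N}H^N(\nabla u)\Phi'(u)\,dx\le0$, hence $\mathcal{Q}\equiv0$ on every $D_j$; the paper's argument likewise uses only the sign condition. Second, there are some minor slips:
\begin{itemize}
\item The substitution is $v=Ne^{-u/N}$. With $v=e^{u/N}$, an affine $v$ gives $u=N\log(\ell\cdot x+c)$, which has the wrong sign.
\item $\lambda$ is constant because $\partial_j\lambda=\partial_j\partial_i\hat a_i(\nabla v)=\partial_i\partial_j\hat a_i(\nabla v)=0$ for $i\neq j$, not because of the equation.
\item There is no need to restrict to $\{\nabla u\neq0\}$ and glue pieces, since $\nabla\hat a(\nabla v)=\lambda I$ holds a.e.\ on the whole connected domain.
\end{itemize}
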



The paper is organized as follows. Section \ref{sec:regularity} contains the proof of regularity of weak solution and a sufficient condition for admissible classical solution. In Section \ref{sec:newton}, we prove Proposition \ref{Prop2}. Section \ref{sec:sufficient} proves the main theorems and gives some sufficient conditions for the condition \eqref{B-Omega}.

\section{Regularity and preliminary estimates}\label{sec:regularity}
In this section, we first prove the $L^{\infty}$-bound of weak solution to \eqref{Main}. Before that, we need the following lemma:

\begin{lemma}\label{iteration}\textup{[Lemma 4.1 in \cite{S1965}]}
    Let $\varphi:[k_0,\infty)\to[0,\infty)$ be a non-increasing function. Assume that there exist constants $C>0$, $\alpha>0$, and $\beta>1$ such that
    \begin{align*}
        \varphi(l) \leq \frac{C}{(l-k)^{\alpha}} \varphi(k)^{\beta},
    \end{align*}
    for every $l > k \geq k_0$. Then 
    \begin{align*}
        \varphi(k_0 + d) = 0,
    \end{align*}
    where one may take 
    \begin{align*}
        d  = 2^{\frac{\beta}{\beta -1}} C^{\frac{1}{\alpha}} \varphi(k_0)^{\frac{\beta -1}{\alpha}}.
    \end{align*}
\end{lemma}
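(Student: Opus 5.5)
The statement to prove is Stampacchia's iteration lemma: a nonnegative, non-increasing $\varphi$ with $\varphi(l)\le C(l-k)^{-\alpha}\varphi(k)^\beta$, $\beta>1$, vanishes at $k_0+d$. My plan is a geometric iteration. I would pick the levels
\[
k_n = k_0 + d\,(1-2^{-n}), \qquad n\ge 0,
\]
so that $k_n$ increases to $k_0+d$ and $k_{n+1}-k_n = d\,2^{-(n+1)}$. The aim is to show by induction that
\[
\varphi(k_n)\le \varphi(k_0)\,2^{-\mu n}, \qquad \mu=\frac{\alpha}{\beta-1}.
\]
Once this holds, monotonicity gives $0\le \varphi(k_0+d)\le \varphi(k_n)\to 0$, which is the claim. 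If $\varphi(k_0)=0$, monotonicity gives the result at once, so I assume $\varphi(k_0)>0$ and hence $d>0$.

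For the induction step, I would apply the hypothesis with $l=k_{n+1}$ and $k=k_n$:
\[
\varphi(k_{n+1})\le \frac{C\,2^{\alpha(n+1)}}{d^\alpha}\,\varphi(k_0)^\beta\,2^{-\mu\beta n}.
\]
I need the right side to be at most $\varphi(k_0)2^{-\mu(n+1)}$. After dividing, this reduces to
\[
C\,d^{-\alpha}\varphi(k_0)^{\beta-1}\,2^{\alpha(n+1)+\mu(n+1)-\mu\beta n}\le 1 .
\]
The key identity is $\mu(\beta-1)=\alpha$, which gives $\alpha n-\mu\beta n+\mu n=0$. So the $n$-dependence in the exponent cancels, and the exponent is the constant $\alpha+\mu=\frac{\alpha\beta}{\beta-1}$. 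The condition therefore becomes
\[
d^\alpha\ge C\,\varphi(k_0)^{\beta-1}2^{\frac{\alpha\beta}{\beta-1}}.
\]
Taking $\alpha$-th roots, this is exactly $d\ge 2^{\frac{\beta}{\beta-1}}C^{1/\alpha}\varphi(k_0)^{\frac{\beta-1}{\alpha}}$, so the prescribed $d$ works. The base case $n=0$ is trivial.

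The only delicate point is choosing the decay rate $\mu$ so that the $n$-dependence cancels. Everything else is routine bookkeeping.
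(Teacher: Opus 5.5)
Your proof is correct, and it is Stampacchia's original argument: dyadic levels $k_n=k_0+d(1-2^{-n})$, induction on $\varphi(k_n)\le\varphi(k_0)2^{-n\alpha/(\beta-1)}$, and the $n$-dependence cancelling because $\mu(\beta-1)=\alpha$. The paper gives no proof of its own and simply cites Lemma 4.1 of Stampacchia, whose proof is this one; your separate treatment of $\varphi(k_0)=0$ correctly ensures $d>0$, so $k_{n+1}>k_n$ and the hypothesis applies.
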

Then, we can prove the $L^{\infty}$-bound of weak solution to \eqref{Main} under some constraints.

\begin{lemma}
    Suppose $\Omega \subset \R^N $ is a bounded domain. Given $f,h: \R \rightarrow \R$, where $f$ satisfies
    \begin{align*}
        |f(t)| \leq C \e^{|t|},
    \end{align*}
    for $t \in \R$ and some constant $C>0$. Let $u \in W^{1,N}(\Omega)$ be a weak solution to
    \begin{equation}\label{Main3}
        \begin{cases}
            \textup{div} (a(\nabla u)) + f(u) =0, \ \ &\text{in} \ \ \Omega , \\
            a(\nabla u) \cdot \nu  + h(u)=0, \ \ &\text{on} \ \ \partial \Omega,
        \end{cases}
    \end{equation}
    where $a = H^{N-1}(\cdot)\nabla H(\cdot): \R^N \rightarrow \R^N$ is a continuous vector field. 
    Suppose one of the following conditions holds:\\
    \text{(1)} There is a $p_0 > \frac{N}{N-1}$, such that $h(u) \in L^{p_0}(\partial \Omega)$. \\
    \text{(2)} The solution $u$ is bounded from below and $h(t) \geq 0$, $t \in \R$.\\
    Then, it holds that $u \in L^{\infty}(\Omega)$.
\end{lemma}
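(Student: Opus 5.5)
We prove the upper bound with a De Giorgi--Stampacchia truncation argument closed by Lemma \ref{iteration}. The lower bound then follows by the same argument applied to $-u$: the map $\xi\mapsto -a(-\xi)$ equals $a_{\tilde H}$ with $\tilde H(\xi)=H(-\xi)$, which is again a norm, and the data become $\tilde f(t)=-f(-t)$, $\tilde h(t)=-h(-t)$. Under (2), $u$ is already assumed bounded from below, so only the upper bound is needed there.

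\textbf{Energy inequality.} For $k\ge k_0>0$, set $A_k=\{u>k\}$ and test \eqref{weak-form} with $\phi=(u-k)_+$. This is admissible after a density argument, because $(u-k)_+\in W^{1,N}(\Omega)$ has a trace in every $L^q(\partial\Omega)$, $q<\infty$. Using $a(\xi)\cdot\xi=H^N(\xi)\ge c|\xi|^N$, we get
\[
c\int_{A_k}|\nabla u|^N\le \int_{A_k} f(u)(u-k)\,dx-\int_{\partial\Omega} h(u)(u-k)_+\,d\sigma .
\]
Under (2) the boundary term is $\le 0$ and can be dropped. Under (1), we bound it by Hölder: $\|h(u)\|_{L^{p_0}(\partial\Omega)}\|(u-k)_+\|_{L^{p_0'}(\partial\Omega\cap A_k)}$, with $p_0'<N$.

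\textbf{Interior term (main obstacle).} The difficulty is that $f(u)$ is only bounded by $Ce^{|u|}$. The key input is Trudinger--Moser: since $u\in W^{1,N}(\Omega)$ on a bounded Lipschitz domain, $e^{\gamma|u|}\in L^1(\Omega)$ for every $\gamma>0$, because $e^{\gamma |u|}\le C_\epsilon e^{\epsilon|u-\bar u|^{N/(N-1)}}$ and $\|u-\bar u\|_{W^{1,N}}<\infty$ allow one to choose $\epsilon$ small. Hence $f(u)\in L^q(\Omega)$ for every $q<\infty$. Choosing $q$ large, Hölder gives
\[
\int_{A_k} f(u)(u-k)\le \|f(u)\|_{L^q}\,\|(u-k)_+\|_{L^{s}}\,|A_k|^{1-1/q-1/s}.
\]

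\textbf{Closing the iteration.} Use the Sobolev embedding $W^{1,N}\hookrightarrow L^s$ for large $s$, together with the trace embedding into $L^{p_0'}$, applied to $(u-k)_+$. Split $\|(u-k)_+\|_{W^{1,N}}\le \|\nabla(u-k)_+\|_{L^N}+\|(u-k)_+\|_{L^N}$ and control the lower-order part by $|A_k|$ to a power and the gradient via Young's inequality. This yields
\[
\|(u-k)_+\|_{L^{s}(\Omega)}+\|(u-k)_+\|_{L^{p_0'}(\partial\Omega)}\le C|A_k|^{\theta},
\]
with some $\theta>0$, and this step is where $p_0>\frac{N}{N-1}$ is used. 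Then, for $l>k$,
\[
(l-k)|A_l|^{1/s}\le \|(u-k)_+\|_{L^s}\le C|A_k|^{\theta'} .
\]
Here $\theta'$ exceeds $1/s$ because the exponent $1-1/q-1/s$ and the boundary exponent contribute extra powers of $|A_k|$ once $q$ and $s$ are large. For the boundary part one can alternatively iterate on $\varphi(k)=|A_k|+\sigma(\partial\Omega\cap A_k)$. Setting $\varphi(k)=|A_k|$ gives
\[
\varphi(l)\le C(l-k)^{-s}\varphi(k)^{\beta},\qquad \beta=s\theta'>1,
\]
so Lemma \ref{iteration} yields $|A_{k_0+d}|=0$, i.e. $u\le k_0+d$ a.e. The delicate point is the exponent bookkeeping that guarantees $\beta>1$ while the constants remain independent of $k$. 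This is done by fixing $k_0$ large enough that $|A_{k_0}|$ is small, which lets the lower-order terms be absorbed.
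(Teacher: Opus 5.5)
Your proposal is correct and follows essentially the same route as the paper. Both use Trudinger--Moser to get $f(u)\in L^q(\Omega)$ for all $q<\infty$, test with $(u-k)_+$, and use H\"older/Sobolev/trace estimates in which $p_0'<N$ yields a positive power of $|A_k|$ from the boundary term. Both then close with Stampacchia's Lemma~\ref{iteration}, drop the boundary term under (2), and pass to $-u$ for the lower bound under (1).

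The differences are cosmetic. You absorb the lower-order term using smallness of $|A_{k_0}|$, where the paper invokes a Poincar\'e inequality for functions vanishing on a set of measure at least $|\Omega|/2$. You apply Chebyshev in $L^s$ rather than in $L^N$ followed by H\"older. Your explicit reflection $\tilde H(\xi)=H(-\xi)$, $\tilde f(t)=-f(-t)$, $\tilde h(t)=-h(-t)$ spells out what the paper dismisses with ``similarly'' for $-u$.
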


\begin{proof}
    Set
    \begin{align*}
        u_{\Omega} = \frac{1}{|\Omega|} \int_{\Omega} u(x) \ \dx. 
    \end{align*}
    Then, it follows from $u \in W^{1,N}(\Omega)$ and  H\"{o}lder inequality that
    \begin{align}\label{u-omega-bound}
        |u_{\Omega}| \leq |\Omega|^{-\frac{1}{N}}||u||_{L^N(\Omega)}. 
    \end{align}
    By  Moser-Trudinger inequality ( see \cite{C2008}),  we have
    \begin{align}\label{MT}
        \int_{\Omega} \exp\left(\frac{\alpha |u(x) - u_{\Omega}|^{\frac{N}{N-1}}}{||\nabla u||_{L^N(\Omega)}^{\frac{N}{N-1}}} \right)  \ \dx \leq C(\Omega) ,
    \end{align}
    where $\alpha$  depends only on $N$ and $\Omega$. 
    
    Next, for any $q \in [1,+\infty)$, we can use Young inequality to obtain
    \begin{align*}
        q|t| \leq  \frac{\alpha|t|^{\frac{N}{N-1}}}{2^{\frac{N}{N-1}}||\nabla u ||_{L^{N}(\Omega)}^{\frac{N}{N-1}}} + C(q, N , \Omega, ||u||_{L^{N}(\Omega)}^{\frac{N}{N-1}}).
    \end{align*}
    Therefore, using $|a+b|^{\frac{N}{N-1}} \leq 2^{\frac{N}{N-1}}(|a|^{\frac{N}{N-1}} + |b|^{\frac{N}{N-1}})$, \eqref{u-omega-bound} and \eqref{MT}, we have
    \begin{align}\label{f-Lq}
       &\quad \int_{\Omega}  |f(u)|^q \ \dx \leq C\int_{\Omega} \e^{q|u|}   \leq e^{C(q,N, \Omega, ||u||_{L^{N}(\Omega)}^{\frac{N}{N-1}})} \int_{\Omega} \exp\left( \frac{\alpha|u(x)|^{\frac{N}{N-1}}}{2^{\frac{N}{N-1}}||\nabla u ||_{L^{N}(\Omega)}^{\frac{N}{N-1}}} \right) \ \dx \nonumber\\
        & \leq C(q,N, \Omega, ||u||_{L^N(\Omega)})\exp\left( \frac{\alpha|u_{\Omega}|^{\frac{N}{N-1}}}{||\nabla u ||_{L^{N}(\Omega)}^{\frac{N}{N-1}}} \right)\int_{\Omega} \exp\left( \frac{\alpha|u(x) - u_{\Omega}|^{\frac{N}{N-1}}}{||\nabla u ||_{L^{N}(\Omega)}^{\frac{N}{N-1}}} \right) \ \dx \nonumber\\
         & \leq C(q,N, \Omega, ||u||_{W^{1,N}(\Omega)}).
    \end{align}

    Now we  define $A_k = \{x \in \Omega: u(x) > k\}$ for $k \geq k_0$, where $k_0$ is defined such that
    \begin{align*}
        |A_{k_0}| = |\{x \in \Omega: u(x) > k_0\}| \leq \frac{1}{2} |\Omega|.
    \end{align*}
    In fact, $k_0$ must exist since
    \begin{align*}
        |A_k| \leq \int_{A_k} \frac{u}{k} \ \dx \leq \frac{|\Omega|^{\frac{N-1}{N}}}{k} ||u||_{W^{1,N}(\Omega)}.
    \end{align*}
    
    Let $v = (u - k)^+$, then $v \in W^{1,N}(\Omega)$ and note that  $k\geq k_0$, we know that the zero set $Z : = \{x \in \Omega: v(x)=0\}$ has positive measure. Thus, by Poincare inequality for functions with positive zero set, we have
    \begin{align}\label{Poincare}
        ||v||_{W^{1,p}(\Omega)} =  ||(u-k)^+||_{W^{1,p}(\Omega)} \leq ||\nabla v||_{L^{p}(\Omega)} = ||\nabla u||_{L^{p}(A_k)},
    \end{align}
    for $1 < p < +\infty$. Test \eqref{Main3} with $v$, we obtain
    \begin{align}\label{u-k}
        \int_{\Omega} a(\nabla u) \cdot \nabla (u-k)^+ \ \dx + \int_{\partial \Omega} h(u) (u-k)^+ \ \text{d}\sigma= \int_{\Omega} f(u) (u-k)^+ \ \dx.
    \end{align}
    First, we suppose $(1)$ holds. 
    Then, by the definition of $a(\cdot)$, we have
    \begin{align}\label{u-k2}
        \int_{\Omega} a(\nabla u) \cdot \nabla (u-k)^+ \ \dx = \int_{A_k} a(\nabla u) \cdot \nabla u  \ \dx \geq c_0 \int_{A_k} |\nabla u|^N \ \dx,
    \end{align}
    where $c_0$ is a positive constant that depends on $H$. Inserting  \eqref{u-k2} into \eqref{u-k}, using trace inequality and \eqref{Poincare}, we obtain
    \begin{align}\label{u-k3}
        c_0 \int_{A_k} |\nabla u|^N \ \dx & \leq \int_{\Omega} f(u) (u-k)^+ \ \dx - \int_{\pa \Omega}h(u)(u-k)^+ \ \text{d} \sigma \nonumber\\
        & \leq  \int_{A_k} |f(u)| (u-k)^+ \ \dx +  \int_{\pa \Omega} |h(u)|(u-k)^+ \ \text{d} \sigma \\
        & \leq ||f(u)||_{L^{p_0}(\Omega)} ||u-k||_{L^{p_0'}(A_k)} + ||h(u)||_{L^{p_0}(\pa \Omega)} ||(u-k)^+||_{L^{p_0'}(\pa \Omega)}, \nonumber \\
        & \leq ||f(u)||_{L^{p_0}(\Omega)} ||u-k||_{L^{p_0'}(A_k)} + ||h(u)||_{L^{p_0}(\pa \Omega)} ||(u-k)^+||_{W^{1,p_0'}(\Omega)}, \nonumber \\
        & \leq ||f(u)||_{L^{p_0}(\Omega)} ||u-k||_{L^{p_0'}(A_k)} + ||h(u)||_{L^{p_0}(\pa \Omega)} ||\nabla u||_{L^{p_0'}(A_k)}, \nonumber
    \end{align}
    where $p_0' = \frac{p_0}{p_0-1}< N$. 
    
    Now we choose $r = N$, it follows from  H\"{o}lder inequality, Sobolev inequality and \eqref{Poincare}, we get
    \begin{align}\label{uk-4}
        ||u-k||_{L^{p_0'}(A_k)} & \leq ||(u-k)^+||_{L^N(A_k)}|A_k|^{\theta} \nonumber \\
        & \leq C(\Omega)|| (u - k)^+||_{W^{1,N}(A_k)}|A_k|^{\theta} \\
        & \leq C(\Omega) ||\nabla u||_{L^{N}(A_k)}|A_k|^{\theta}, \nonumber
    \end{align}
    where $\theta = \frac{1}{p_0'} - \frac{1}{N} $. 
    
    Besides, using H\"{o}lder inequality, we have
    \begin{align}\label{uk-5}
        ||\nabla u||_{L^{p_0'}(A_k)} \leq ||\nabla u||_{L^{N}(A_k)}|A_k|^{\theta}.
    \end{align}
    Inserting \eqref{uk-4} and \eqref{uk-5} into \eqref{u-k3} and applying Young inequality, we obtain
    \begin{align*}
        c_0 \int_{A_k} & |\nabla u|^N \ \dx  \leq \left( C( \Omega) ||f||_{L^{p_0}(\Omega)} |A_k|^{\theta} + ||h||_{L^{p_0}(\pa \Omega)} |A_k|^{\theta} \right) \left( \int_{A_k} |\nabla u|^N \ \dx \right)^{\frac{1}{N}} \\
        & \leq \frac{c_0}{2}\int_{A_k} |\nabla u|^N \ \dx + \left( C( \Omega, c_0) ||f||_{L^{p_0}(\Omega)}^{\frac{N}{N-1}} + C(c_0)||h||_{L^{p_0}(\pa \Omega)}^{\frac{N}{N-1}} \right)|A_k|^{\frac{N\theta}{N-1}} ,
    \end{align*}
    which implies
    \begin{align}\label{Ln}
        \int_{A_k} |\nabla u|^N \ \dx \leq C(\Omega, c_0) (||f||_{L^{p_0}(\Omega)}^{\frac{N}{N-1}} + ||h||_{L^{p_0}(\pa \Omega)}^{\frac{N}{N-1}} ) |A_k|^{\frac{N \theta}{N-1}}.
    \end{align}
    Next, for $l > k$, we  choose $s > N$, by using the similar arguments in \eqref{uk-4}, we have
    \begin{align*}
        |A_l| & \leq \int_{A_l} \left( \frac{(u-k)^+}{l-k} \right)^N \ \dx\\
        & \leq \frac{1}{(l-k)^N} \int_{A_k} [(u-k)^+]^N \ \dx \\
        & \leq \frac{1}{(l-k)^N} \left(\int_{A_k} [(u-k)^+]^s \ \dx \right)^{\frac{N}{s}} |A_k|^{1-\frac{N}{s}} \\
        & \leq \frac{C(\Omega)}{(l-k)^N} ||\nabla u||_{L^N({A_k})}^N |A_k|^{1 - \frac{N}{s}}\\
        & \leq \frac{C(\Omega, c_0)}{(l-k)^N} (||f||_{L^{p_0}(\Omega)}^{\frac{N}{N-1}} + ||h||_{L^{p_0}(\pa \Omega)}^{\frac{N}{N-1}} )|A_k|^{1 + \frac{N \theta}{N-1} - \frac{N}{s}} \\
        & \leq \frac{C(\Omega, c_0, q, N , ||h||_{L^{p_0}(\pa\Omega)}, || u||_{W^{1,N}(\Omega)})}{(l-k)^N} |A_k|^{1+ \delta},
    \end{align*}
    where $\delta = \frac{N\theta}{N-1} - \frac{N}{s}>0$ if we choose $s= s(p_0,N)$ large enough. Hence, Lemma \ref{iteration} tells us that there is a constant $M$ depending on $|| u||_{W^{1,N}(\Omega)}, ||h||_{L^{p_0}(\pa\Omega)},  N, \Omega, H$, such that $|A_M| = 0$, which means that
    \begin{align}\label{u+}
        \sup_{\Omega} u^+  \leq M(\Omega, H, N, ||h||_{L^{p_0}(\pa\Omega)},||\nabla u||_{L^N(\Omega)}).
    \end{align}
    Similarly, we can use the similar arguments to deal with $-u$ and obtain
    \begin{align}\label{u-}
        \sup_{\Omega} u^-  \leq \tilde{M} (\Omega, H, N, ||h||_{L^{p_0}(\pa\Omega)},||\nabla u||_{L^N(\Omega)}),
    \end{align}
    for a positive constant $\tilde{M}$.

     Next, we assume the condition (2) holds. In this case we only need to prove that $u^+$ is bounded. From \eqref{u-k}, we know that $h(u)(u-k)^+ \geq 0$, hence \eqref{u-k2} becomes
     \begin{align*}
         c_0 \int_{A_k} |\nabla u|^N \ \dx  \leq ||f(u)||_{L^{p_0}(\Omega)} ||u-k||_{L^{p_0'}(A_k)}.
     \end{align*}
     Using the similar arguments as in the case $(1)$, we obtain \eqref{u+}. This completes the proof.
\end{proof}

\begin{rmk}
    If $h$ satisfies $|h(t)| \leq C\e^{t}$ for some constant $C>0$. Then, from the Trudinger-Moser trace inequality (see \cite{C2008}), we have
    \begin{align*}
        \int_{\partial \Omega} \exp\left(\frac{\alpha |u(x) - u_{\pa \Omega}|^{\frac{N}{N-1}}}{||\nabla u||_{L^N(\Omega)}^{\frac{N}{N-1}}} \right)  \ \text{d} \sigma \leq C(\Omega), \ \ u_{\partial \Omega} = \frac{1}{|\partial \Omega|} \int_{\partial \Omega} u(x) \ \text{d} \sigma.
    \end{align*}
    From the Sobolev trace inequality, we have
    \begin{align*}
        |u_{\pa \Omega}| \leq C|\pa \Omega|^{-1} ||u||_{W^{1,N}(\Omega)}.
    \end{align*}
    Thus, we can use the same argument as \eqref{f-Lq} to prove that $h \in L^p(\partial \Omega)$ for any $1\leq p < +\infty$.
\end{rmk}

Next, we prove the  following Proposition for higher regularity. 
\begin{prop}\label{prop2-3}
    Suppose $\Omega \subset \R^N$ is a bounded convex domain, $f , h \in C^1(\R)$
    and $a$ satisfies
    \begin{align}\label{a-condition}
        \begin{cases}
            |a(\xi)| + |\pa a(\xi)|(\xi^2 + s^2)^{\frac{1}{2}} \leq L(|\xi|^2 + s^2)^{\frac{N-1}{2}}, \\
            \mu (|\xi|^2 + s^2)^{\frac{N-2}{2}} |z|^2 \leq \pa a(\xi)z \cdot z,
        \end{cases}
    \end{align}
    where $0 < \mu \leq L$ and $0 \leq s < 1$ are three constants. If $u \in W^{1,N}(\Omega)$ is a bounded weak solution of
    \begin{align*}
        \begin{cases}
            \text{div} (a(\nabla u )) + f(u) = 0, \ \ & \text{in} \ \ \Omega, \\
            a(\nabla u) \cdot \nu + h(u) = 0, \ \ & \text{on} \ \ \pa \Omega.
        \end{cases}
    \end{align*}
    Then, there is a constant $C = C(N ,L ,\mu, \Omega, ||u||_{C^1(\Omega)}, f, h)$, such that
    \begin{align*}
        ||a(\nabla u)||_{W^{1,2}(\Omega)} \leq C.
    \end{align*}
\end{prop}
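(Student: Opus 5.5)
We follow the difference-quotient (Nirenberg) strategy of Cianchi--Maz'ya and Antonini--Ciraolo--Farina for second-order regularity of the stress field $a(\nabla u)$. The aim is an $L^2$ bound for $\nabla(a(\nabla u))$ via the integrand $(|\nabla u|^2+s^2)^{\frac{N-2}{2}}|D^2u|^2$, which by \eqref{a-condition} dominates $|\nabla a(\nabla u)|^2$ up to a factor depending on $L$, $\mu$ and $\|\nabla u\|_\infty$. The right-hand side $f(u)$ is bounded because $u\in C^1$, and the boundary datum $h(u)$ is $C^1$ along $\partial\Omega$.

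\textbf{Step 1: regularize.} Replace $a$ by smooth uniformly elliptic fields $a_\varepsilon$ satisfying \eqref{a-condition} with $s$ replaced by $s_\varepsilon=\max(s,\varepsilon)>0$, with constants independent of $\varepsilon$. Replace $\Omega$ by smooth convex domains $\Omega_m$ approximating it from inside. Solve the corresponding Robin problems with right-hand side $f(u)$ and boundary datum $h(u)$, frozen as given data; their bounds come from $\|u\|_{C^1}$. Standard theory gives smooth solutions $u_\varepsilon\in C^{2}(\overline{\Omega}_m)$. Uniform $C^{1,\beta}$ estimates (Lieberman's boundary gradient estimates for conormal problems) give $u_\varepsilon\to u$ in $C^1$. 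It therefore suffices to prove $\|a_\varepsilon(\nabla u_\varepsilon)\|_{W^{1,2}}\le C$ with $C$ independent of $\varepsilon$ and $m$.

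\textbf{Step 2: the Reilly-type pointwise identity.} For smooth $u$ with $A=a(\nabla u)$ we use
\[
(\operatorname{div}A)^2-\operatorname{tr}\big((\nabla A)^2\big)=\operatorname{div}\big((\operatorname{div}A)A-(\nabla A)A\big),
\]
which holds because $\nabla A=\partial a(\nabla u)D^2u$. We also use the algebraic inequality, valid under \eqref{a-condition},
\[
\operatorname{tr}\big((\partial a\,D^2u)^2\big)\ge c(\mu,L)\,|\partial a\,D^2u|^2\,(|\nabla u|^2+s^2)^{-\frac{N-2}{2}}\cdot(|\nabla u|^2+s^2)^{\frac{N-2}{2}}\ge c\,|\nabla A|^2 .
\]
This follows by writing $\partial a=S^2$ with $S$ symmetric positive definite, so that $\operatorname{tr}((S^2D^2u)^2)=|S D^2u S|^2$, and then comparing with $|S^2D^2u|^2$ using the eigenvalue ratio bounded by $L/\mu$. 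Integrating over $\Omega_m$ and using $\operatorname{div}A=-f(u)$ gives
\[
c\int|\nabla A|^2\le \int f(u)^2+\int_{\partial\Omega_m}\big((\operatorname{div}A)A\cdot\nu-\nabla A\,A\cdot\nu\big)\,d\sigma .
\]

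\textbf{Step 3: the boundary term, which is the main obstacle.} We decompose $A=A^T+(A\cdot\nu)\nu$ on $\partial\Omega_m$ and use $A\cdot\nu=-h(u)$. The standard computation (as in Grisvard's lemma for convex domains) gives
\[
\operatorname{div}A\,(A\cdot\nu)-(\nabla A\,A)\cdot\nu=-\operatorname{div}_{\partial\Omega}\big((A\cdot\nu)A^T\big)+2A^T\cdot\nabla_{\partial\Omega}(A\cdot\nu)\cdot(\ldots)-\mathcal H(A\cdot\nu)^2... ,
\]
and more precisely
\[
(\operatorname{div}A)A\cdot\nu-(\nabla A)A\cdot\nu=\operatorname{div}_{\partial\Omega}\big((A\cdot\nu)A^T\big)-2A^T\cdot\nabla_{\partial\Omega}(A\cdot\nu)+\mathcal H_{\partial\Omega}(A\cdot\nu)^2-\Pi(A^T,A^T).
\]
The tangential divergence integrates to zero on the closed surface. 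Convexity gives $-\Pi(A^T,A^T)\le0$. The term $\mathcal H(A\cdot\nu)^2$ has the wrong sign, but it equals $\mathcal H\,h(u)^2$, which is bounded by $C(\Omega,\|u\|_\infty,h)$ because the $\Omega_m$ have uniformly bounded curvature (or, alternatively, by the geometry of $\Omega$ in an integrated trace sense, as in Cianchi--Maz'ya). Finally, $-2A^T\cdot\nabla_{\partial\Omega}(A\cdot\nu)=2h'(u)\,A^T\cdot\nabla_{\partial\Omega}u$ is bounded by $C\|h\|_{C^1}\|\nabla u\|_\infty^{N}|\partial\Omega|$.

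\textbf{Step 4: conclude.} Collecting these bounds gives $\int_{\Omega_m}|\nabla a_\varepsilon(\nabla u_\varepsilon)|^2\le C$. Together with $|a|\le L(|\nabla u|^2+s^2)^{\frac{N-1}{2}}$ bounded, this yields the $W^{1,2}$ bound uniformly in $\varepsilon$ and $m$. Weak compactness in $W^{1,2}$ and the pointwise convergence $a_\varepsilon(\nabla u_\varepsilon)\to a(\nabla u)$ then pass the bound to the limit.

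The delicate points are Step 3, namely keeping the curvature term controlled uniformly for nonsmooth convex $\Omega$ (which we handle via the inner smooth convex approximation), and ensuring that the frozen-data approximating problems converge to $u$ itself. For the latter we use uniqueness of the monotone problem with fixed data; since $a$ is strictly monotone, the Robin problem with given data has a unique solution up to constants, and the constant is fixed by matching the mean.
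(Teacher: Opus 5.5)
Your architecture is the paper's. Both use inner smooth convex approximations, a regularized operator with $f(u)$ and $h(u)$ frozen, and an integral identity for $\int\operatorname{tr}((\nabla A)^2)$, $A=a(\nabla u)$, whose boundary part is controlled by convexity. Both then use the ellipticity-ratio bound $\operatorname{tr}((\nabla A)^2)\ge\frac{\mu}{L}|\nabla A|^2$. The paper quotes this bound; your $S^2$ argument proves it, using that $\pa a$ is symmetric because $a=\nabla(H^N/N)$.

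The real difference is the boundary term, and there your route is the better one. The paper tests with $\pa_m(\varphi\zeta_\delta)$, $\varphi=a_m$, and obtains $\int\nabla(f(u))\cdot A+\int_{\pa\Omega_k}\bigl(A\cdot\nabla(A\cdot\nu)-\Pi_x(A^T,A^T)\bigr)$. It then replaces $\int_{\pa\Omega_k}A\cdot\nabla(A\cdot\nu)$ by $-\int_{\pa\Omega_k}h'(u)A\cdot\nabla u$. That step uses only the tangential derivative of the Robin condition. It silently drops $(A\cdot\nu)\pa_\nu(A\cdot\nu)=-h(u)\pa_\nu(A\cdot\nu)$, which the boundary condition does not control (the paper itself makes this point in Section 4). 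As written, that step is justified only for $h\equiv0$.

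In your Reilly-type identity, $\pa_\nu(A\cdot\nu)$ cancels exactly between $(\operatorname{div}A)(A\cdot\nu)$ and $(\nabla A\,A)\cdot\nu$. So only $\nabla_{\pa\Omega}(A\cdot\nu)=-h'(u)\nabla_{\pa\Omega}u$ enters. The paper's route reaches the same expression only after inserting \eqref{boundary-div-decomposition} and integrating by parts on the boundary.

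Your signs in Steps 2--3 are wrong, though the error is harmless. Integration gives
\[
\int_{\Omega_m}\operatorname{tr}\bigl((\nabla A)^2\bigr)\,dx=\int_{\Omega_m}f(u)^2\,dx-\int_{\pa\Omega_m}\bigl((\operatorname{div}A)A-(\nabla A)A\bigr)\cdot\nu\,d\sigma .
\]
The boundary integrand equals $\operatorname{div}_{\pa\Omega}\bigl((A\cdot\nu)A^T\bigr)-2A^T\cdot\nabla_{\pa\Omega}(A\cdot\nu)+(N-1)\mathcal H_{\pa\Omega}(A\cdot\nu)^2+\Pi_x(A^T,A^T)$, with $+\Pi_x$. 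As a check, for $A=e_1$ on the unit disc your version integrates to $-2\pi$ instead of $0$. Hence both geometric terms have the favourable sign, and
\[
\int_{\Omega_m}\operatorname{tr}\bigl((\nabla A)^2\bigr)\,dx\le\int_{\Omega_m}f(u)^2\,dx+2\int_{\pa\Omega_m}|h'(u)|\,|A^T|\,|\nabla_{\pa\Omega}u|\,d\sigma .
\]
The mean-curvature term needs no bound at all. This matters, because inner approximations of a non-smooth convex $\Omega$ do not have uniformly bounded curvature.

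One point that both you and the paper skip: the frozen-data problem on $\Omega_m$ is solvable only if $\int_{\Omega_m}f(u)\,dx=\int_{\pa\Omega_m}h(u)\,d\sigma$, which fails in general for $\Omega_m\neq\Omega$. The fix is to replace $f(u)$ by $f(u)-\lambda_m$, where $\lambda_m=|\Omega_m|^{-1}\bigl(\int_{\Omega_m}f(u)\,dx-\int_{\pa\Omega_m}h(u)\,d\sigma\bigr)\to0$. This $\lambda_m$ is exactly the Lagrange multiplier hidden in the paper's zero-mean minimization.
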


\begin{proof}
    We follow the arguments of Proposition 2.2 in \cite{CCR2021}, see also Proposition 2.8 in \cite{CFR2020}. Choose a sequence of open convex smooth subdomain $\{\Omega_k\}_k$ of $\Omega$, such that $\lim_k \Omega_k = \Omega$, and suppose there is $\bar{x} \in \Omega_k$ for any $k$. We consider the following problem:
    \begin{align}\label{uk}
        \begin{cases}
            \text{div} (a(\nabla u_k )) + f(u) = 0, \ \ & \text{in} \ \ \Omega_k, \\
            u_k(\bar{x}) = u(\bar{x}), \\
            a(\nabla u_k) \cdot \nu + h(u) = 0, \ \ & \text{on} \ \ \pa \Omega_k.
        \end{cases}
    \end{align}
  In order to find the solution of \eqref{uk}, we first study the following minimization problem:
    \begin{align*}
        \min\limits_{v \in W^{1,N}(\Omega_k)} \left\{ \int_{\Omega_k} \left( \frac{1}{N}H^N(\nabla v) - f(u)v \right) \ \dx + \int_{\pa \Omega_k} h(u) v \ \text{d} \sigma \right\}.
    \end{align*}
    Since $\xi\mapsto H(\xi)^N/N$ is strictly convex and the remaining terms are linear in $v$, the functional is strictly convex modulo additive constants. Restricting the functional to the zero-mean subspace of $W^{1,N}(\Omega_k)$, the Poincaré inequality yields coercivity, which implies weak lower semicontinuity. Hence, the direct method gives a unique minimizer $v_k$ in the zero-mean class. Next, set $u_k(x) = v_k(x) + u(\bar{x}) - v_k(\bar{x})$, we find the unique solution $u_k$ of equation \eqref{uk}, which is uniformly bounded in $C^{1,\alpha}_{loc}(\Omega)$. Then, we choose a sequence of radially symmetric smooth mollifier and let
    \begin{align*}
        a^l(z) = (a * \phi_l)(z), \ \ z \in \R^N.
    \end{align*}
     Since ${\phi_l}$ is an approximation of the identity and $a$ is continuous, we have $a^l \to a$ locally uniformly in $\mathbb R^N$. Moreover, by the positivity and unit-mass property of mollifier $\phi_l$, condition \eqref{a-condition} is preserved under convolution, with $s$ replaced by $s_l$ with $s_l  \to 0$ as $l \to +\infty$.
      
      Let $u_{k,l}$ be the unique solution of the following equation:
    \begin{align}\label{ukl}
        \begin{cases}
            \text{div} (a^l(\nabla u_{k,l})) + f(u) = 0, \ \ & \text{in} \ \ \Omega_k,  \\
            u_{k,l}(\bar{x}) = u(\bar{x}), \\
            a^l(\nabla u_{k,l}) \cdot \nu + h(u) =0, \ \ &\text{on} \ \ \pa \Omega_k.
        \end{cases}
    \end{align}
    Since $u$ is bounded, we know that $f(u)$ and $h(u)$ are also bounded. From the standard elliptic regularity, we obtain that $\{u_{k,l}\}$ is bounded in $C^{1,\alpha}(\bar{\Omega}_k)$ uniformly in $l$. Let $ l \to +\infty$, we find that the limit function $\bar{u}_k$ satisfies
    \begin{align*}
        \begin{cases}
            \text{div} (a(\nabla \bar{u}_k )) + f(u) = 0, \ \ & \text{in} \ \ \Omega_k, \\
            \bar{u}_k(\bar{x}) = u(\bar{x}), \\
            a(\nabla \bar{u}_k) \cdot \nu + h(u) = 0, \ \ & \text{on} \ \ \pa \Omega_k.
        \end{cases}
    \end{align*}
    From the uniqueness of solution for \eqref{uk}, we find $\bar{u}_k = u_k$ and thus $u_{k,l} \to u_k$ in $C_{loc}^{1}(\Omega)$ as $l \to +\infty$. Similarly, we have $u_k \to u$ as $k \to \infty$.

    Test \eqref{ukl} with $ \psi \in C^{\infty}(\Omega_k)$, we have
    \begin{align}\label{psi-test}
        \int_{\Omega_k} a^l(\nabla u_{k,l}) \cdot \nabla \psi \ \dx & = -  \int_{\pa \Omega_k} a^l(\nabla u_{k,l}) \cdot \nu \psi \ \text{d} \sigma + \int_{\Omega_k} f(u) \psi \ \dx \\
         & = \int_{\pa \Omega_k} h(u) \psi \ \text{d} \sigma + \int_{\Omega_k} f(u) \psi \ \dx.  \nonumber
    \end{align}
    Define
    \begin{align*}
        \Omega_{k, \delta} = \{ x \in \Omega_k: \text{dist}(x, \pa \Omega_k) > \delta\}.
    \end{align*}
    Choose $\delta$ small enough, and $x \in \Omega_{k, \delta} \backslash \Omega_{k, 2\delta}$, we can find a point $y = y(x) \in \pa \Omega_{k,\delta}$, such that
    \begin{align*}
        x = y - |x-y| \nu(y),
    \end{align*}
    where $\nu(y)$ is the outer normal vector at $y$ on $\pa \Omega_{k, \delta}$. Choose $ \varphi \in C_c^{\infty}(\Omega)$, then $(\Omega_{k,\delta} \backslash \Omega_{k,2 \delta}) \cap \text{supp} \ \varphi$ is smooth, and can be parametrized on $\pa \Omega_{k, \delta}$ by a $C^1$ function $g$.

    Choose a cut-off function $\zeta_{\delta}$, which satisfies $ \zeta_{\delta} = 1$ in $\Omega_{k, 2\delta}$, $\zeta_{\delta} = 0$ in $\Omega_k \backslash \Omega_{k, \delta}$ with
    \begin{align*}
        \nabla \zeta_{\delta}(x) = - \frac{1}{\delta} \nu(y(x)), \ \ x \in \pa \Omega_{k, \delta}.
    \end{align*}
   We also choose $\psi  = \pa_m (\varphi \zeta_{\delta})$ for $m \in \{1,2,\cdots,N\}$ in \eqref{psi-test} and use integration by parts, we obtain
    \begin{align}\label{test1}
        \sum\limits_{i=1}^N \int_{\Omega_k} \pa_m a_i^l(\nabla u_{k,l}) \left( \zeta_{\delta}\pa_i \varphi + \varphi \pa_i \zeta_{\delta}\right) \ \dx = \int_{\Omega_k} \pa_m (f(u)) \varphi \zeta_{\delta}dx.
    \end{align}
    Write $w_i = \pa_m a_i^l(\nabla u_{k,l}) \varphi$, we have
    \begin{align*}
        &\qquad\int_{\Omega_k} w_i \pa_i \zeta_{\delta} \ \dx\\
         & = - \frac{1}{\delta}\int_{\Omega_{k, \delta} \backslash \Omega_{k, 2\delta}} \nu_i(y(x)) w_i \ \text{d}x \\
        & = - \frac{1}{\delta} \int_{\delta}^{2\delta} \text{d}t \int_{\pa \Omega_{k, \delta}} \nu_i(y(x)) w_i(y - t\nu(y)) |\text{det} (Dg)| \ \text{d} \sigma \\
        & = - \int_{1}^2 \text{d}s \int_{\pa \Omega_{k, s\delta}} \nu_i(y) w_i(y - s \delta\nu(y)) |\text{det} (Dg)| \ \text{d} \sigma .
    \end{align*}
    Note that $w_i$ is continuous, let $\delta \to 0$, we  obtain
    \begin{align*}
        \lim\limits_{\delta \to 0} \int_{\Omega_k} \pa_m a_i^l(\nabla u_{k,l}) \varphi \pa_i \zeta_{\delta} \ \dx = -\int_{\pa \Omega_k} \nu_i\varphi  \pa_m a_i^l(\nabla u_{k,l})  \ \text{d} \sigma.
    \end{align*}
    Hence, taking $\delta \to 0$ in \eqref{test1}, we obtain
    \begin{align}\label{test2}
        \sum\limits_{i=1}^N \bigg( \int_{\Omega_k}\pa_m a_i^l(\nabla u_{k,l}) \pa_i \varphi \ \dx - \int_{\pa \Omega_k} \pa_m a_i^l(\nabla u_{k,l})\varphi \nu_i \ \text{d} \sigma\bigg)  
         = \int_{\Omega_k} \pa_m (f(u)) \varphi \ \dx. 
    \end{align}
    Next, we consider the following set:
    \begin{align*}
        \Omega_{k,\delta}^{t} : = \{x \in \Omega_{k,\delta}: \text{dist}(x,\pa \Omega_{k,\delta}) > t\}.
    \end{align*}
    Then, if $x \in (\Omega_{k,\delta} \backslash \Omega_{k, 2\delta}) \cap \text{supp} \ \varphi $, then $x$ can be written as $x = y(x)-t\nu(y)$, then we have $x \in \pa \Omega_{k,\delta}^{t}$ and the outer normal vector $\nu(x)$ at $x$ coincides with $\nu(y)$. Thus, we can choose $$\varphi(x) = a_m^l(\nabla u_{k,l}(x)).$$ 
    Then
    \begin{align*}
        & \quad \sum_{i,m=1}^{N} \pa_m a_i^l(\nabla u_{k,l}(x))\varphi(x) \nu_i(x) \\
        &= \sum_{i,m=1}^{N} \left(\varphi(x) \pa_m  (a^l_i(\nabla u_{k,l}(x)) \cdot \nu_i(x)) - \varphi(x) a_i^l(\nabla u_{k,l}(x)) \pa_m \nu_i(x) \right)\\
        & = \sum_{i,m=1}^{N} a_m^l(\nabla u_{k,l}(x)) \left( \pa_m  (a^l_i(\nabla u_{k,l}(x)) \cdot \nu_i(x))  - a_i^l(\nabla u_{k,l}(x)) \pa_m \nu_i(x) \right)\\
        & = \sum_{m=1}^{N} a_m^l(\nabla u_{k,l}(x)) \pa_m  (a^l(\nabla u_{k,l}(x)) \cdot \nu(x)) - \Pi_x^t(A^{k,l}(x), A^{k,l}(x)),
    \end{align*}
    where $\Pi_{x}^{t}$ is the second fundamental form at $x \in \pa \Omega_{k,\delta}^{t}$ and $A^{k,l}(x)$ is the tangential part of $a^l(\nabla u_{k,l}(x))$. Since $\Omega_k$ is convex, we know that $\Pi_{x}^t$ is non-negative. Hence, we have
    \begin{align*}
        & \quad \sum_{m=1}^{N} \pa_m a_i^l(\nabla u_{k,l}(x))\varphi(x) \nu_i(x)  \\
        & \leq \sum_{m=1}^{N} a_m^l(\nabla u_{k,l}(x)) \pa_m  (a^l(\nabla u_{k,l}(x)) \cdot \nu(x)).
    \end{align*}
    Integrating the above inequality on $\pa \Omega_k$ and using \eqref{ukl}, we obtain
    \begin{align}\label{test3}
        & \quad \sum_{i,m=1}^{N} \int_{\pa \Omega_k}\pa_m a_i^l(\nabla u_{k,l}(x))\varphi(x) \nu_i(x) \ \text{d} \sigma \nonumber \\
        & \leq \sum_{m=1}^{N} \int_{\pa \Omega_k} a_m^l(\nabla u_{k,l}(x)) \pa_m  (a^l(\nabla u_{k,l}(x)) \cdot \nu(x)) \ \text{d} \sigma \nonumber \\
        & = \int_{\pa \Omega_k} a^l(\nabla u_{k,l}(x)) \cdot  \nabla (a^l(\nabla u_{k,l}(x)) \cdot \nu(x)) \ \text{d} \sigma \\
        & = -  \int_{\pa \Omega_k} h'(u)a^l(\nabla u_{k,l}(x)) \cdot  \nabla u(x) \ \text{d} \sigma. \nonumber
    \end{align}
    Inserting \eqref{test3} into \eqref{test2}, we obtain
    \begin{align*}
       &\quad  \sum\limits_{i,m=1}^N  \int_{\Omega_k}\pa_m a_i^l(\nabla u_{k,l}) \pa_i a_m^l(\nabla u_{k,l}) \ \dx \\
        & \leq N \int_{\Omega_k} \nabla (f(u)) \cdot a^l(\nabla u_{k,l})  \ \dx  - \int_{\pa \Omega_k} h'(u)a^l(\nabla u_{k,l}(x)) \cdot  \nabla u(x) \ \text{d} \sigma.
    \end{align*}
    Using the similar arguments as Lemma 4.5 in \cite{AKM2018}, we obtain
    \begin{align}\label{test4}
        \int_{\Omega_k} |\nabla a^l(\nabla u_{k,l}) |^2 \ \dx & \leq \int_{\Omega_k} |a^l(\nabla u_{k,l}) |^2 \ \dx +  C \int_{\Omega_k} |\nabla (f(u)) | | a^l(\nabla u_{k,l}) | \ \dx \\
        & \qquad + C  \int_{\pa \Omega_k}| h'(u)|a^l(\nabla u_{k,l}(x)) \cdot  \nabla u(x) \ \text{d} \sigma. \nonumber 
    \end{align}
    For fixed $k$, we first let $l\to\infty$. By the local uniform convergence $a^l\to a$, the convergence $u_{k,l}\to u_k$, the terms on the right-hand side of \eqref{test4} converge by dominated convergence, while the left-hand side of \eqref{test4} passes to the limit by weak lower semicontinuity. We then let $k\to\infty$. Since $\Omega_k \to\Omega$ and $u_k\to u$, we infer from dominated convergence and smooth convex approximation $\Omega_k\to\Omega$ that
    \begin{align}\label{nabla-a-L2}
        \int_{\Omega} |\nabla a(\nabla u) |^2 \ \dx & \leq \int_{\Omega} |a(\nabla u) |^2 \ \dx +  C \int_{\Omega} |\nabla (f(u)) \cdot a(\nabla u)|  \ \dx \\
        & \qquad + C  \int_{\pa \Omega} |h'(u)|H^N(\nabla u) \ \text{d} \sigma.\nonumber 
    \end{align}
    Since the right-hand side of \eqref{nabla-a-L2} is bounded by a constant that depends on $\Omega, ||u||_{C^1(\Omega)}, f,h, L, \mu$, we obtain $a(\nabla u) \in W^{1,2}(\Omega)$. This completes the proof.
\end{proof}

Using the estimate above, we can define the integral of $S_2(W)$ and $S_{ij}^2(W)$ in Section \ref{sec:newton}. However, we still need the admissible assumption in Definition \ref{def:classicalRA} and \ref{def:weakRA} to define the boundary integral $\int_{\pa \Omega} B_{\Omega}[u] \ \text{d} \sigma$ in Section \ref{sec:newton}. 

\section{Newton defect and the equality profile}\label{sec:newton}

In this section, we will prove Proposition \ref{Prop2}. We will follow the idea in \cite{CCR2021}. Let
\[
       v=Ne^{-u/N},\qquad \widehat H(\xi)=H(-\xi),\qquad
       \widehat a(\xi)=\widehat H(\xi)^{N-1}\nabla\widehat H(\xi),
\]
From \eqref{Main}, we know that
\begin{align}\label{v-equation}
    \begin{cases}
        \Delta_N^{\hat{H}} v = \frac{(N-1)\hat{H}^N(\nabla v)}{v} + \hat{f}(v),  \ \ &\text{in} \ \ \Omega, \\
        \hat{a}(\nabla v) \cdot \nu - \hat{h}(v) = 0 , \ \ &\text{on} \ \ \partial \Omega,
    \end{cases}
\end{align}
where
\begin{align*}
    \hat{f}(v) = N^{1-N} v^{N-1} f\left(-N\log \frac{v}{N}\right), \\  \hat{h}(v) = N^{1-N} v^{N-1} h\left(-N\log \frac{v}{N}\right),
\end{align*}
and
\begin{align*}
    \Delta_N^{\hat{H}} v = \text{div}(\hat{a}(\nabla v)), \ \ \text{with}\ \ \hat{a}(\xi) = \hat{H}^{N -1}(\xi) \cdot \nabla \hat{H}(\xi).
\end{align*}
Let
\begin{align*}
    V(\xi) = \frac{\hat{H}(\xi)}{N} , \ \ W = \nabla(\nabla_{\xi} V(\nabla v)) = V_{\xi_i \xi_j}(\nabla v) v_{ij} = \nabla \hat{a}(\nabla v),
\end{align*}
then, it holds that $ w_{ij} = \partial_j \hat{a}^i(\nabla v) $. Since $ v = N \e^{-\frac{u}{N}}$, we have
\begin{align*}
    w_{ij} & = - \partial_j \left( \e^{-\frac{(N-1)u}{N}} a^i(\nabla u) \right) \\
    & =- \left( \e^{-\frac{(N-1)u}{N}} \pa_ja^i(\nabla u)  - \frac{N-1}{N} \e^{-\frac{(N-1)u}{N}} \pa_j u \cdot a^i(\nabla u)\right).
\end{align*}
Define
\begin{align*}
    S^2_{ij}(W) = - w_{ji} + \delta_{ij} \text{tr}(W),
\end{align*}
and
\begin{align*}
    S_2(W) = \frac{1}{2} \sum_{i,j =1}^N S^2_{ij}(W) w_{ij} = \frac{1}{2}(-\text{tr}(W^2) + \text{tr}(W)^2).
\end{align*}
Then, we have the following  inequality:
\begin{align}\label{Newton}
    S_2(W) \leq \frac{N-1}{2N} \text{tr}(W)^2,
\end{align}
and the equality holds if and only if $W = \frac{\text{tr}(W)}{N} I$ (see \cite{CS2009}).

\begin{lemma}\label{lem:equality-profile}
Assume that the equality case in \eqref{Newton} holds.  Then either
\begin{align*}
    u(x) = N \log N - N \log (\ell \cdot x + c),
\end{align*}
with $\ell \in \R^N, c \in \R$ be chosen such that $\ell \cdot x + c > 0 $ in $\Omega$, or
\begin{align*}
    u(x) = N \log N - N \log (c_1 + c_2 H_0(x_0-x)^{\frac{N}{N-1}}),
\end{align*}
with $c_2 \neq 0$, $x_0 \in \R^N$ and $c_1, c_2$ be chosen such that $c_1 + c_2 H_0(x_0-x)^{\frac{N}{N-1}} $ in $\Omega$.
\end{lemma}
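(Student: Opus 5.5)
The equality case in \eqref{Newton} gives $W=\frac{\operatorname{tr}(W)}{N}I$ on $\Omega$. Write $\lambda(x)=\operatorname{tr}(W)/N$, so that
\begin{align*}
    \partial_j \hat a^i(\nabla v)=\lambda\,\delta_{ij}\qquad\text{in }\Omega .
\end{align*}
I will work on the open set where the solution is regular (classical, or $\hat a(\nabla v)\in W^{1,2}$ by Proposition \ref{prop2-3}, which suffices in the distributional sense). The plan has three steps: show that $\lambda$ is constant; integrate $\hat a(\nabla v)$ explicitly; then invert $\hat a$ using the dual norm $H_0$.

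\textbf{Step 1: $\lambda$ is constant.} Set $X=\hat a(\nabla v)$, a vector field with $\partial_j X^i=\lambda\delta_{ij}$. Its symmetric part of the derivative is pure trace, and its antisymmetric part vanishes. Differentiating distributionally gives
\begin{align*}
    \partial_k\partial_j X^i=\partial_k\lambda\,\delta_{ij}.
\end{align*}
The left side is symmetric in $j,k$, so $\partial_k\lambda\,\delta_{ij}=\partial_j\lambda\,\delta_{ik}$. Taking $i=j\neq k$ yields $\partial_k\lambda=0$ for every $k$; this uses $N\ge2$. Hence $\lambda\equiv\lambda_0$ is constant on the connected set $\Omega$.

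\textbf{Step 2: integrate $X$.} From Step 1, $X(x)=\lambda_0 x+b$ for some $b\in\R^N$. Two cases arise.

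If $\lambda_0=0$, then $\hat a(\nabla v)\equiv b$ is constant. Since $\hat a$ is a homeomorphism of $\R^N$ (it is the gradient of the strictly convex function $\hat H^N/N$), $\nabla v\equiv\ell$ is constant and $v=\ell\cdot x+c$. As $v=Ne^{-u/N}>0$, we get $u=N\log N-N\log(\ell\cdot x+c)$, which is \eqref{affine}.

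If $\lambda_0\ne0$, write $X=\lambda_0(x-x_0)$ with $x_0=-b/\lambda_0$.

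\textbf{Step 3: invert $\hat a$ in the case $\lambda_0\ne0$.} This step carries the anisotropic content. I will use the standard duality facts for $H$ and its dual norm $H_0$: $\nabla H_0(\nabla H(\xi))=\xi/H(\xi)$, $H_0(\nabla H(\xi))=1$, and the inverse of $\xi\mapsto H^{N-1}(\xi)\nabla H(\xi)$ is $\eta\mapsto H_0^{\frac{1}{N-1}}(\eta)\nabla H_0(\eta)$. For $\hat H(\xi)=H(-\xi)$ these formulas hold with the appropriate sign flips, and the hat conventions produce exactly the argument $x_0-x$.

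Solving $\hat a(\nabla v)=\lambda_0(x-x_0)$ gives $\nabla v$ as $|\lambda_0|^{1/(N-1)}$ times a $(0)$-homogeneous-corrected expression $H_0^{\frac1{N-1}}\nabla H_0$ evaluated at $\pm(x_0-x)$. Using the $1$-homogeneity of $H_0$, this expression equals
\begin{align*}
    \nabla\Big(c_2\,H_0(x_0-x)^{\frac{N}{N-1}}\Big)
\end{align*}
with $c_2=\pm\frac{N-1}{N}|\lambda_0|^{\frac1{N-1}}\neq0$. Integrating gives $v=c_1+c_2H_0(x_0-x)^{\frac N{N-1}}$, and since $u=N\log N-N\log v$, this is the profile \eqref{bubble-form-intro}.

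\textbf{Main obstacle.} I expect the difficulty to be the bookkeeping of signs between $H$, $\hat H$ and $H_0$ in Step 3: tracking whether the final argument is $x_0-x$ or $x-x_0$, and which sign $c_2$ takes. I will fix this by computing directly with the radial model $v=c_1+c_2\hat H_0(x-x_0)^{N/(N-1)}$ and checking that it satisfies $\hat a(\nabla v)=\lambda_0(x-x_0)$. A second point is justifying Step 1 at low regularity. Using only $X\in W^{1,2}$, the distributional identity $\partial_k\lambda\,\delta_{ij}=\partial_j\lambda\,\delta_{ik}$ gives $\nabla\lambda=0$ in $\mathcal D'(\Omega)$, hence $\lambda$ is constant a.e., which is enough.
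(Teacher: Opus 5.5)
Your proposal is correct and follows the paper's proof essentially step for step. You commute (distributional) derivatives to get $\nabla\lambda=0$, integrate to $\hat a(\nabla v)=\lambda_0 x+b$, use injectivity of $\hat a$ when $\lambda_0=0$, and invert $\hat a$ when $\lambda_0\neq0$; your explicit inverse via $H_0^{1/(N-1)}\nabla H_0$ is exactly the paper's $\nabla V^*$ with $V^*(\xi)=\frac{N-1}{N}H_0(-\xi)^{\frac{N}{N-1}}$.

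On the sign bookkeeping you flagged: for $\lambda_0<0$ the inversion actually gives $c_2<0$ together with $H_0(x-x_0)$, not $H_0(x_0-x)$. The stated form is reached only because $H$ is a genuine (even) norm, so that $\hat H=H$ and $H_0$ is even; that, not the hat conventions, is what justifies the argument $x_0-x$.
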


\begin{proof}
The equality case holds if and only if there is $\lambda(x)$ defined in $\Omega$, such that
\[
       W=\nabla(\widehat a(\nabla v))=\lambda(x)I, a.e. \hbox{ in }\Omega.
\]
Therefore, we have $\partial_j \widehat{a}_i(\nabla v)=0$ for $i\ne j$, $\partial_i \widehat{a}_i(\nabla v)=\lambda$, and for $i\ne j$,
\[
    \partial_j\lambda =\partial_j \partial_i \widehat{a}_i(\nabla v)=\partial_i\partial_j \widehat{a}_i(\nabla v)=0.
\]
Since $N\ge2$, it follows that $\nabla\lambda=0$ and therefore $\lambda$ is constant. Hence
\[
       \widehat{a}(\nabla v)=\lambda x+b, \ \ b \in \R^N.
\]
If $\lambda=0$, then $\widehat{a}(\nabla v)$ is constant vector. Since the strict convexity of $H^N$ implies the strict monotonicity of $\widehat a=\nabla(\widehat H^N/N)$, we obtain that $\nabla v$ is constant. Therefore, there are $c \in \R, \  \ell \in \R^N$, such that
\[
    v(x) = \ell \cdot x + c.
\]
Besides, from the definition of $v$, we must have
\begin{align*}
    v(x) = \ell \cdot x + c > 0, \ \ \text{in} \ \ \Omega.
\end{align*}
If $\lambda \ne 0$, we need to use the Legendre dual of $V(\xi) = \widehat H(\xi)^N / N$, namely
\[
       V^*(\xi)=\frac{N-1}{N} H_0(-\xi)^{\frac{N}{N-1}}.
\]
Since $\widehat{a}(\nabla v)=\nabla_{\xi} V(\nabla v) = \lambda x + b$, we have $\nabla v=\nabla V^*(\lambda x+b)$.  Integrating gives
\[
       v(x) = c_1 + c_2 H_0(x_0-x)^{\frac{N}{N-1}},
\]
with $c_2 \neq 0$, $x_0 \in \R^N$ and $c_1, c_2$ be chosen such that $c_1 + c_2 H_0(x_0-x)^{\frac{N}{N-1}} $ in $\Omega$.  since $v = N \e^{-\frac{u}{N}}$, the desired result holds.
\end{proof}

The following Lemma is from  \cite{BC2018} (Lemma 4.1):

\begin{lemma}\label{lem1}
    Suppose $W = \nabla (\nabla_{\xi} V(\nabla v)) = V_{\xi_i \xi_j}(\nabla v) v_{ij}$, where $v$ and $V$ are two functions in $\R^N$, then for any given $\gamma \in \R$, we have
    \begin{align*}
        2 v^{\gamma} S^2(W) =  \textup{div} (v^{\gamma} S^2_{ij}(W) V_{\xi_i}(\nabla v)) - \gamma v^{\gamma -1} S^2_{ij}(W) V_{\xi_i}(\nabla v) v_j.
    \end{align*}
    Moreover, if $V(\xi) = \frac{H^p(\xi)}{p}$ for some positive constant $p$, then it holds that
    \begin{align*}
        &\quad2 v^{1-N} S^2(W)  +  Np(N-1)(p-1) v^{-N-1} V^2(\nabla v) + (1-N)(2p-1)v^{-N} V(\nabla v) \Delta_p^{H} v \\
        & = \text{div} (v^{1-N} S^2_{ij}(W) V_{\xi_i}(\nabla v) + (1-N)(p-1) v^{-N}V(\nabla v) \nabla_{\xi} V(\nabla v) ),
    \end{align*}
    where $\Delta_p^H v = \textup{div}(H^{p-1}(\nabla v) \cdot \nabla H(\nabla v))$.
\end{lemma}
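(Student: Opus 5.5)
The plan is a direct pointwise computation. I assume first that $v$ and $V$ are smooth enough, say $v\in C^3$ and $V\in C^3$ near the range of $\nabla v$, so that third derivatives commute. The weaker-regularity situation is then reached by the regularisation of Section~\ref{sec:regularity} (the functions $u_{k,l}$ with smoothed $a^l$) followed by passing to the limit.

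\textbf{Divergence-free structure.} Write $w_{ij}=\partial_j\big(V_{\xi_i}(\nabla v)\big)$. I first show that the Newton tensor is divergence free in its second index:
\[
\sum_j\partial_j S^2_{ij}(W)=-\sum_j\partial_j\partial_i V_{\xi_j}(\nabla v)+\partial_i\sum_k\partial_kV_{\xi_k}(\nabla v)=0 .
\]
This is just the commutation $\partial_j\partial_i=\partial_i\partial_j$ applied to the vector field $\nabla_\xi V(\nabla v)$.

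\textbf{First identity.} By the product rule,
\[
\sum_j\partial_j\big(S^2_{ij}V_{\xi_i}\big)=\sum_{i,j}S^2_{ij}w_{ij}=2S_2(W).
\]
Multiplying by $v^\gamma$ and moving $\partial_j v^\gamma=\gamma v^{\gamma-1}v_j$ to the other side gives the first identity of Lemma~\ref{lem1}.

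\textbf{Second identity.} Take $V=H^p/p$ and $\gamma=1-N$. The first identity gives
\[
2v^{1-N}S_2=\operatorname{div}\big(v^{1-N}S^2_{ij}V_{\xi_i}\big)+(N-1)v^{-N}S^2_{ij}V_{\xi_i}v_j .
\]
The last contraction is evaluated using the $p$-homogeneity of $V$.

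\emph{Trace part.} Euler's relation $\xi\cdot\nabla_\xi V=pV$ together with $\operatorname{tr}W=\Delta^H_p v$ gives
\[
\operatorname{tr}(W)\,V_{\xi_i}v_i=p\,V\,\Delta_p^Hv .
\]

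\emph{Off-diagonal part.} Since $\partial_i(v_jV_{\xi_j})=p\,\partial_i V(\nabla v)$ and $V_{\xi_j}v_{ij}=\partial_i V(\nabla v)$, we get
\[
v_j\,w_{ji}=v_j\,\partial_iV_{\xi_j}=(p-1)\,\partial_i V(\nabla v),
\]
and hence
\[
S^2_{ij}V_{\xi_i}v_j=-(p-1)\,\nabla_\xi V\cdot\nabla(V(\nabla v))+pV\Delta^H_p v .
\]

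\emph{Absorbing the gradient term.} The remaining term $-(p-1)(N-1)v^{-N}\nabla_\xi V\cdot\nabla(V(\nabla v))$ is not of the desired form. I rewrite it with
\[
\operatorname{div}\big(v^{-N}V\nabla_\xi V\big)=v^{-N}\nabla_\xi V\cdot\nabla V(\nabla v)+v^{-N}V\Delta^H_pv-Np\,v^{-N-1}V^2 ,
\]
where Euler's relation is used again in the last term. Substituting and collecting terms should produce:
\begin{itemize}
\item the divergence correction $(1-N)(p-1)v^{-N}V\nabla_\xi V$;
\item the coefficient $(N-1)p+(N-1)(p-1)=(N-1)(2p-1)$ in front of $v^{-N}V\Delta_p^Hv$, which sits on the left with sign $(1-N)$;
\item the term $Np(N-1)(p-1)v^{-N-1}V^2$.
\end{itemize}

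\textbf{Main obstacles.} The algebra itself is routine, but the signs in this last bookkeeping step are where I expect errors, so I would recheck them carefully. The genuine difficulty is justifying the commutation of third derivatives when $v$ is only $C^{1,\alpha}$ and $\hat a(\nabla v)\in W^{1,2}$, as in Proposition~\ref{prop2-3}. My approach there is to prove the identity for the smooth, uniformly elliptic approximations and pass to the limit in the weak (integrated against test functions) form. This uses the $W^{1,2}$ bound on $a(\nabla u)$, which makes $S_2(W)$ and $S^2_{ij}(W)V_{\xi_i}$ integrable.
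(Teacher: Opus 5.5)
Your computation is correct and is the standard proof of this identity, which the paper does not reprove but imports from \cite{BC2018}: the relation $\sum_j\partial_j S^2_{ij}(W)=0$ gives the first formula, and Euler's relations for the $p$-homogeneous $V$, together with your expansion of $\operatorname{div}(v^{-N}V\nabla_\xi V)$, give exactly the coefficients $Np(N-1)(p-1)$, $(1-N)(2p-1)$ and $(1-N)(p-1)$ in the statement. Your last paragraph on low regularity concerns the integrated version, Lemma~\ref{lem2}, not this pointwise identity; if you pursue it, pass to the limit in the first (homogeneity-free) identity, because the mollified fields $a^l$ of Section~\ref{sec:regularity} are no longer gradients of $p$-homogeneous functions and Euler's relation fails for them.
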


\begin{lemma}\label{lem2}
    For any $\varphi \in C^1_c(\Omega)$, it holds that
    \begin{align}
        & \quad \int_{\Omega} \bigg( 2 v^{1-N} S^2(W) +  N^2(N-1)^2 v^{-N-1} V^2(\nabla v) \nonumber \\
        & \hspace{9em }+ (1-N)(2N-1)v^{-N} V(\nabla v) \Delta_N^{\hat{H}} v \bigg) \varphi \ \dx \label{2-1} \\
        & = - \int_{\Omega} \varphi_j (v^{1-N} S^2_{ij}(W) V_{\xi_i}(\nabla v) - (N-1)^2 v^{-N} V(\nabla v) V_{\xi_j}(\nabla v) ) \ \dx. \nonumber
    \end{align}
\end{lemma}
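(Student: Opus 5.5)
The plan is to obtain \eqref{2-1} as the weak (distributional) form of the second identity in Lemma \ref{lem1}, specialized to $p=N$ with $V(\xi)=\widehat H^N(\xi)/N$. With this choice $\nabla_\xi V=\widehat a$, $\Delta_p^{H}$ becomes $\Delta_N^{\hat H}$, and the constants become $Np(N-1)(p-1)=N^2(N-1)^2$, $(2p-1)=2N-1$ and $(1-N)(p-1)=-(N-1)^2$. These are exactly the coefficients in \eqref{2-1}. For smooth $v$ with $v>0$, multiplying that pointwise identity by $\varphi\in C^1_c(\Omega)$ and integrating the divergence term by parts gives \eqref{2-1} immediately, with no boundary term since $\operatorname{supp}\varphi\Subset\Omega$. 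The whole issue is therefore regularity: $v$ is only $C^{1,\alpha}_{loc}$, and $W=\nabla \widehat a(\nabla v)$ is only known to lie in $L^2_{loc}$.

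First I fix the regularity available on $K=\operatorname{supp}\varphi$. By the interior $C^{1,\alpha}$ theory, $u$ is bounded with bounded gradient on $K$, so $v=Ne^{-u/N}$ satisfies $0<c\le v\le C$ and $|\nabla v|\le C$ on $K$. By Proposition \ref{prop2-3}, $a(\nabla u)\in W^{1,2}_{loc}$. From the identity $\widehat a(\nabla v)=-e^{-(N-1)u/N}a(\nabla u)$ and the product rule for (bounded $C^1$-function)$\times W^{1,2}$, we get $A:=\widehat a(\nabla v)\in W^{1,2}(K')\cap C^0$ on a neighbourhood $K'$ of $K$. Hence $W=\nabla A\in L^2$, and both $S^2_{ij}(W)$ and $S_2(W)$ are well defined, lying in $L^2$ and $L^1$ respectively. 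Moreover $V(\nabla v)=\widehat H(\nabla v)^N/N$ is a continuous bounded function of $A$ (via the Legendre duality used in Lemma \ref{lem:equality-profile}). Since $\nabla V^*$ is locally Lipschitz away from $0$ and $V(\nabla v)=\frac{N-1}{N}H_0(A)^{N/(N-1)}$ is $C^1$ in $A$, the chain rule gives $V(\nabla v)\in W^{1,2}$ as well, with derivative computable in terms of $W$.

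Next I check the identity directly in $\mathcal D'$ instead of approximating, using the key structural fact that $S^2_{ij}(W)$ is divergence free. Indeed, in distributions,
$\sum_j\partial_j S^2_{ij}(W)=-\sum_j\partial_j\partial_i A_j+\partial_i\sum_j\partial_jA_j=0$, which is valid because $A\in W^{1,2}$. Consequently,
\[
\int_\Omega \varphi_j\, v^{1-N}S^2_{ij}(W)A_i\,dx=-\int_\Omega \varphi\, S^2_{ij}(W)\,\partial_j\bigl(v^{1-N}A_i\bigr)\,dx ,
\]
which is justified by approximating $\varphi v^{1-N}A_i\in W^{1,2}_0$ with smooth functions. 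Expanding $\partial_j(v^{1-N}A_i)=v^{1-N}w_{ij}+(1-N)v^{-N}v_jA_i$ and using $\sum S^2_{ij}w_{ij}=2S_2(W)$ recovers the first part of Lemma \ref{lem1} with $\gamma=1-N$. The term $S^2_{ij}A_iv_j$ is then rewritten with $S^2_{ij}=-w_{ji}+\delta_{ij}\operatorname{tr}W$. This uses $\operatorname{tr}W=\Delta_N^{\hat H}v$ and the homogeneity relation $A\cdot\nabla v=N V(\nabla v)$, whose weak derivative gives $w_{ji}A_iv_j=\partial_j(V(\nabla v))\,(\ldots)$ in the combination appearing in the correction term $-(N-1)^2v^{-N}V A_j$. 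Integrating that correction term against $\varphi_j$ by parts, again legitimate since $v^{-N}V(\nabla v)A\in W^{1,2}$, produces the terms $N^2(N-1)^2v^{-N-1}V^2$ and $(1-N)(2N-1)v^{-N}V\Delta_N^{\hat H}v$. This is the same algebra as in \cite{BC2018}, now carried out with weak derivatives.

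The main obstacle is this last step: justifying the pointwise algebraic identities, such as $\partial_j V(\nabla v)=w_{ij}\,\partial_{\xi_i}V^*\circ\ldots$, i.e. the $(p-1)$-homogeneity manipulations, when $\nabla v$ is not differentiable. It is also delicate where $\nabla v=0$, since there $V$ is only $C^1$ as a function of $A$. My first attempt is to express everything through $A$ alone, using $\nabla v=\nabla V^*(A)$, $V(\nabla v)=(N-1)V^*(A)$ and $\nabla v\cdot A=NV^*(A)$. These make all relevant quantities $C^1$ functions of $A\in W^{1,2}\cap C^0$, so the chain rule applies a.e. If that proves delicate, the fallback is to run the smooth computation on the regularized solutions $u_{k,l}$ from the proof of Proposition \ref{prop2-3} and pass to the limit, using the $W^{1,2}$ bound \eqref{nabla-a-L2} together with $C^1_{loc}$ convergence.
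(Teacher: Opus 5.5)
Your argument is correct, but it takes a different route from the paper. The paper omits the details. It obtains \eqref{2-1} by applying the smooth pointwise identity of Lemma \ref{lem1} to regularized approximants and passing to the limit, as in Lemma 3.3 of \cite{CFR2020} and Lemma 3.1 of \cite{CCR2021}. That route reuses Lemma \ref{lem1} verbatim, but it requires constructing the approximants (e.g.\ the $u_{k,l}$ of Proposition \ref{prop2-3}) and justifying the limit in the quadratic term $S_2(W)$.

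You instead verify the identity directly in $\mathcal{D}'(\Omega)$, using only $v\in C^1$, $v>0$ and $A=\widehat a(\nabla v)\in W^{1,2}_{loc}\cap C^0$.
\begin{itemize}
\item The distributional identity $\sum_j\pa_j S^2_{ij}(W)=0$, already valid for $A\in W^{1,1}_{loc}$, gives the first identity of Lemma \ref{lem1} in weak form.
\item The chain rule applied to the $C^1$ function $V^*$ gives $\sum_j w_{ji}v_j=\pa_i[V^*(A)]=(N-1)\,\pa_i[V(\nabla v)]$ a.e., so the homogeneity step never differentiates $\nabla v$.
\end{itemize}
This also removes your worry about $\{\nabla v=0\}$. $V^*$ is $C^1$ on all of $\R^N$, including the origin, because $V$ is strictly convex and superlinear; equivalently, $H_0$ is $C^1$ off the origin and $N/(N-1)>1$. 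So your ``first attempt'' already closes the proof, and the fallback approximation is not needed. What your route buys is minimal hypotheses and no convergence analysis; what the paper's route buys is a direct citation of the smooth identity.

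Two corrections are needed in the write-up.
\begin{itemize}
\item The Legendre relations are $V(\nabla v)=\frac{1}{N-1}V^*(A)=\frac1N H_0(-A)^{\frac{N}{N-1}}$ and $A\cdot\nabla v=\frac{N}{N-1}V^*(A)$, not $(N-1)V^*(A)$ and $NV^*(A)$ as you wrote. The factor $N-1$ in $\sum_j w_{ji}v_j=(N-1)\pa_i[V(\nabla v)]$ is exactly what produces the coefficients $N^2(N-1)^2$ and $(1-N)(2N-1)$, so with your constants the computation would not reproduce \eqref{2-1}.
\item Do not take the weak derivative of $A\cdot\nabla v=NV(\nabla v)$, since that would require $\nabla v\in W^{1,1}_{loc}$. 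Differentiate $V^*(A)$ instead.
\end{itemize}
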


This Lemma can be proved by using Lemma \ref{lem1} and an approximation argument similar to Lemma 3.3 in \cite{CFR2020}, see also Lemma 3.1 in \cite{CCR2021}. Here we omit the proof.

For a positive constant $t$, define
\begin{align*}
    \Omega_t = \{ x \in \Omega : \text{dist}(x,\partial \Omega) > t \}.
\end{align*}
Choose a small constant $\delta>0$ , define a Lipschitz function $\zeta_{\delta}: \R^N \to [0,1]$ such that
\begin{align*}
    \zeta_{\delta} = \begin{cases}
        1, \ \ \text{in} \ \ \Omega_{2 \delta}, \\
        0, \ \ \text{in} \ \ \Omega \backslash \Omega_{\delta},
    \end{cases}
\end{align*}
and
\begin{align*}
    \nabla \zeta_{\delta}(x) = - \frac{1}{\delta} \nu(y(x)), \ \ \text{in} \ \ \Omega_{\delta} \backslash \Omega_{2\delta},
\end{align*}
where $y(x) \in \partial \Omega_{\delta}$ is chosen such that
\begin{align*}
    \text{dist}(x, y(x)) = \text{dist}(x, \partial \Omega_{\delta}),
\end{align*}
and
\begin{align*}
     x = y(x) - \nu(y(x))|y(x)-x|.
\end{align*}
Let $\varphi = \zeta_{\delta}$ in Lemma \ref{lem2}, we have
\begin{lemma}\label{lem3}
    Suppose the condition in Lemma \ref{lem2} holds, then we have
    \begin{align}\label{3-1}
        & \quad \int_{\Omega} \bigg( 2 v S^2(W) +  N^2(N-1)^2 v^{-1} V^2(\nabla v) \nonumber \\
        & \hspace{9em} + (1-N)(2N -1) V(\nabla v) \Delta_N^{\hat{H}} v \bigg) v^{-N}\zeta_{\delta} \ \dx \nonumber\\
        & = \frac{1}{\delta} \int_{\Omega_{\delta} \backslash \Omega_{2 \delta}} \bigg( - \nabla(\hat{a}(\nabla v) \cdot \nu) \cdot \hat{a}(\nabla v) + \Pi _x(\hat{a}^T(\nabla v), \hat{a}^T(\nabla v)) \\
        & \hspace{9em} + \hat{a}(\nabla v)\cdot \nu \bigg( \frac{N-1}{N} \frac{\hat{H}^{N}(\nabla v)}{v} + \hat{f}(v)\bigg)\bigg) v^{1-N} \ \dx, \nonumber
    \end{align}
    where $\Pi_x(\cdot, \cdot)$ is the second fundamental form at $x$.
\end{lemma}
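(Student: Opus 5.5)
We plan to obtain \eqref{3-1} by inserting $\varphi=\zeta_\delta$ into \eqref{2-1} of Lemma \ref{lem2}. Then we compute the right-hand side, which lives only on the collar $\Omega_\delta\setminus\Omega_{2\delta}$. Since $\zeta_\delta$ is only Lipschitz, we first justify its use in Lemma \ref{lem2}. We mollify $\zeta_\delta$ into $C^1_c(\Omega)$ functions converging in $W^{1,\infty}$-weak$^*$ and uniformly. The integrands are in $L^1$, because $v$ is bounded away from $0$ and $\infty$ (as $u$ is bounded) and $\hat a(\nabla v)\in W^{1,2}$ by Proposition \ref{prop2-3}, so we may pass to the limit. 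On the left we factor $v^{1-N}=v\cdot v^{-N}$ and $v^{-N-1}=v^{-1}v^{-N}$, which gives the stated form. Since $\varphi_j=-\delta^{-1}\nu_j(y(x))$ on the collar and vanishes elsewhere, the right-hand side becomes
\[
\frac1\delta\int_{\Omega_\delta\setminus\Omega_{2\delta}}\Big(v^{1-N}S^2_{ij}(W)V_{\xi_i}(\nabla v)\nu_j-(N-1)^2v^{-N}V(\nabla v)\,\hat a(\nabla v)\cdot\nu\Big)\,dx .
\]
Here $\nu=\nu(y(x))$ coincides with the unit normal to the parallel surface $\partial\Omega_{\delta+t}$ through $x$, as in the proof of Proposition \ref{prop2-3}. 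We also use $V_\xi(\nabla v)=\hat a(\nabla v)$, with the normalization $V=\hat H^N/N$ so that $\nabla_\xi V=\hat a$, consistent with $W=\nabla\hat a(\nabla v)$.

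The key pointwise step is to rewrite $S^2_{ij}(W)\hat a_i\nu_j$. By definition
\[
S^2_{ij}(W)\hat a_i\nu_j=-w_{ji}\hat a_i\nu_j+\operatorname{tr}(W)\,\hat a\cdot\nu=-\partial_i\hat a_j\,\hat a_i\nu_j+\Delta_N^{\hat H}v\;\hat a\cdot\nu .
\]
We then write $\partial_i\hat a_j\,\nu_j=\partial_i(\hat a\cdot\nu)-\hat a_j\partial_i\nu_j$, where $\nu$ is extended as the gradient of the distance function, so that $\partial_i\nu_j$ is the second fundamental form of the level surface, symmetric and annihilating $\nu$. This gives
\[
-\hat a\cdot\nabla(\hat a\cdot\nu)+\Pi_x(\hat a^T,\hat a^T)+\Delta_N^{\hat H}v\,\hat a\cdot\nu .
\]

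Next we substitute the equation \eqref{v-equation}, namely $\Delta_N^{\hat H}v=(N-1)\hat H^N(\nabla v)/v+\hat f(v)$, and combine it with the second collar term. Using $V(\nabla v)=\hat H^N/N$, that term equals $-(N-1)^2v^{-N}\hat H^N\hat a\cdot\nu/N$. Multiplying the first part by $v^{1-N}$ yields $(N-1)\hat H^N v^{-N}\hat a\cdot\nu$, and the two combine as
\[
(N-1)\Big(1-\tfrac{N-1}{N}\Big)=\tfrac{N-1}{N}.
\]
This produces exactly the coefficient $\frac{N-1}{N}\frac{\hat H^N}{v}$ in \eqref{3-1}, together with the $\hat f(v)$ term.

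The main obstacle is regularity. The pointwise manipulations need $\hat a(\nabla v)$ to be differentiable, while only $W^{1,2}$ is available, and $\Pi_x$ refers to the parallel surfaces inside the collar. We would handle this as the paper does: carry out the computation for the regularized solutions $u_{k,l}$ of Proposition \ref{prop2-3}, or equivalently use $\hat a(\nabla v)\in W^{1,2}$ together with the a.e. chain rule, and pass to the limit. This yields the identity for each fixed $\delta$ small enough that the distance function is $C^2$ on the collar.
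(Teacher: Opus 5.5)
Your proposal is correct and is essentially the paper's proof. You insert $\zeta_\delta$ into Lemma~\ref{lem2}, rewrite $S^2_{ij}(W)\hat a_i\nu_j$ via $\partial_i\hat a_j\,\nu_j=\partial_i(\hat a\cdot\nu)-\hat a_j\partial_i\nu_j$ to produce the $\Pi_x$ term, substitute the $v$-equation for $\Delta_N^{\hat H}v$, and combine the two $\hat H^N$ contributions into the coefficient $\frac{N-1}{N}$. Your extra steps only make explicit what the paper leaves tacit: you approximate the Lipschitz cutoff by $C^1_c$ functions, and you read $V=\hat H^N/N$, which the paper's computation implicitly uses.
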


\begin{proof}
    Insert $\varphi = \zeta_{\delta}$ into \eqref{2-1}, we get
    \begin{align*}
        & \quad \int_{\Omega} \bigg(  2 v S^2(W) +  N^2(N-1)^2 v^{-1} V^2(\nabla v) + (1-N)(2N-1) V(\nabla v) \Delta_N^{\hat{H}} v \bigg) v^{-N}\zeta_{\delta} \ \dx \\
        &= \frac{1}{\delta} \int_{\Omega_{\delta} \backslash    \Omega_{2 \delta}} \nu_j (v^{1-N} S^2_{ij}(W) V_{\xi_i}(\nabla v) - (N-1)^2 v^{-N} V(\nabla v) V_{\xi_j}(\nabla v) ) \ \dx.
    \end{align*}
    Let
    \begin{align*}
        \Theta = v^{1-N} S^2_{ij}(W) V_{\xi_i}(\nabla v) \cdot \nu
        - (N-1)^2 v^{-N} V(\nabla v) \nabla V(\nabla v) \cdot \nu.
    \end{align*}
    Since
    \begin{align*}
        S^2_{ij}(W) = - w_{ji} + \delta_{ij} \text{Tr}(W) = - w_{ji} + \delta_{ij} \Delta_N^{\hat{H}} v,
    \end{align*}
    we have
    \begin{align*}
        S^2_{ij}(W) V_{\xi_i}(\nabla v) \cdot \nu & = - w_{ji} V_{\xi_i}(\nabla v) \cdot \nu_j + \Delta_N^{\hat{H}} v \cdot \nabla_{\xi} V(\nabla v) \cdot \nu \\
        & = - \pa_i \hat{a}_j(\nabla v) \cdot \hat{a}_i(\nabla v) \cdot \nu_j + \Delta_N^{\hat{H}} v \cdot \hat{a}(\nabla v) \cdot \nu \\
        & = - \pa_i (\hat{a} (\nabla v) \cdot \nu) \hat{a}_i(\nabla v) + \hat{a}_i(\nabla v) \hat{a}_j(\nabla v) \pa_i \nu_j + \Delta_N^{\hat{H}} v \cdot \hat{a}(\nabla v) \cdot \nu \\
        & = - \nabla(\hat{a}(\nabla v) \cdot \nu) \cdot \hat{a}(\nabla v) + \Pi_x(\hat{a}^T(\nabla v), \hat{a}^T(\nabla v)) \\
        &\quad + \Delta_N^{\hat{H}} v \cdot \hat{a}(\nabla v) \cdot \nu  \\
        & = -\nabla(\hat{a}(\nabla v) \cdot \nu) \cdot \hat{a}(\nabla v) + (N-1)\hat{a}(\nabla v) \cdot \nu  \frac{\hat{H}^N(\nabla v)}{v} \\
        & \quad+ \hat{a}(\nabla v) \cdot \nu \hat{f}(v) + \Pi_x(\hat{a}^T(\nabla v), \hat{a}^T(\nabla v)),
    \end{align*}
    and
    \begin{align*}
        v^{-N} V(\nabla v) \nabla V(\nabla v) \cdot \nu & = v^{-N} \frac{\hat{H}^N(\nabla v)}{N} \hat{H}^{N-1}(\nabla v) \nabla \hat{H}(\nabla v) \cdot \nu \\
        & = v^{-N} \frac{\hat{H}^N(\nabla v)}{N} \hat{a}(\nabla v) \cdot  \nu.
    \end{align*}
    Therefore, we have
    \begin{align*}
        \Theta = &\bigg( - \nabla(\hat{a}(\nabla v) \cdot \nu) \cdot \hat{a}(\nabla v) + \Pi _x(\hat{a}^T(\nabla v), \hat{a}^T(\nabla v)) \\
        & \quad + \hat{a}(\nabla v)\cdot \nu \bigg( \frac{N-1}{N} \frac{\hat{H}(\nabla v)}{v} + \hat{f}(v)\bigg)\bigg) v^{1-N}.
    \end{align*}
    Thus \eqref{3-1} holds.
\end{proof}

\begin{proof}[Proof of Proposition \ref{Prop2}]
    Write $c_N = N^{1-N}$. Using the Newton type inequality, we get
    \begin{align*}
        & \text{L.H.S of \eqref{3-1}}  \leq \int_{\Omega} \bigg( \frac{N-1}{N} v^{1-N} (\Delta_N^{\hat{H}} v)^2 +  N^2(N-1)^2 v^{-N-1} V^2(\nabla v) \\
        &\qquad + (1-N)(2N-1)v^{-N} V(\nabla v) \Delta_N^{\hat{H}} v \bigg) \zeta_{\delta} \ \dx\\
        & = \int_{\Omega} \bigg( \frac{N-1}{N} v^{1-N} \bigg( \frac{(N-1)\hat{H}^N(\nabla v)}{v} + \hat{f}(v) \bigg)^2 +  N^2(N-1)^2 v^{-N-1} V^2(\nabla v) \\
        & \qquad + (1- N)(2N-1)v^{-N} V(\nabla v) \bigg(\frac{(N-1)\hat{H}^N(\nabla v)}{v} + \hat{f}(v)\bigg) \bigg) \zeta_{\delta} \ dx\\
        & = \frac{N-1}{N} \int_{\Omega} v^{1-N} \hat{f}^2(v) \zeta_{\delta} \ \dx - \frac{N-1}{N} \int_{\Omega}v^{-N} \hat{H}^{N}(\nabla v) \hat{f}(v) \zeta_{\delta} \ \dx \\
        & = c_N^2 \frac{N-1}{N} \int_{\Omega} v^{N-1} f^2\bigg( - N \log \frac{v}{N} \bigg) \zeta_{\delta} \ \dx \\
        & \qquad - c_N \frac{N-1}{N} \int_{\Omega} v^{-1} \hat{H}^{H}(\nabla v) f\bigg( - N \log \frac{v}{N} \bigg) \zeta_{\delta} \ \dx.
    \end{align*}
    Thus, it follows that
    \begin{align*}
        &\quad c_N \frac{N-1}{N} \left( c_N\int_{\Omega} v^{N-1} f^2\bigg( - N \log \frac{v}{N} \bigg) \zeta_{\delta} \ \dx - \int_{\Omega} v^{-1} \hat{H}^{N}(\nabla v) f\bigg( - N \log \frac{v}{N} \bigg) \zeta_{\delta} \ \dx \right) \\
        & \geq \frac{1}{\delta} \int_{\Omega_{\delta} \backslash \Omega_{2 \delta}} \bigg( - \nabla(\hat{a}(\nabla v) \cdot \nu) \cdot \hat{a}(\nabla v) + \Pi _x(\hat{a}^T(\nabla v), \hat{a}^T(\nabla v)) \\
        & \qquad + \hat{a}(\nabla v)\cdot \nu \bigg( \frac{N-1}{N} \frac{\hat{H}(\nabla v)}{v} + \hat{f}(v)\bigg)\bigg) v^{1-N} \ \dx.
    \end{align*}
    Inserting $v = N \e^{-\frac{u}{N}}$ into the above inequality and divided by $c_N$, we obtain
    \begin{align}
        & \quad \frac{N-1}{N} \int_{\Omega} \e^{-\frac{(N-1)u}{N}} f^2(u) \zeta_{\delta} \ \dx - \frac{N-1}{N^2} \int_{\Omega} e^{-\frac{(N-1)u}{N}} H^{N}(\nabla u) f(u) \zeta_{\delta} \ \dx \nonumber \\
        & \geq \frac{(N-1)^2}{N^2 \delta} \int_{\Omega_{\delta} \backslash \Omega_{2\delta}} \e^{-\frac{(N-1)u}{N}} H^N(\nabla u) a(\nabla u) \cdot \nu \ dx \label{inequ-1} \\
        & \qquad -\frac{1}{\delta} \int_{\Omega_{\delta} \backslash \Omega_{2\delta}} \e^{-\frac{(N-1)u}{N}}  a(\nabla u) \cdot \nu f(u)  \ \dx \nonumber\\
        & \qquad -\frac{1}{\delta} \int_{\Omega_{\delta} \backslash \Omega_{2\delta}} \e^{-\frac{(N-1)u}{N}} \nabla (a(\nabla u) \cdot \nu) \cdot a(\nabla u) \ \dx \nonumber \\
        & \qquad  + \frac{1}{\delta} \int_{\Omega_{\delta} \backslash \Omega_{2\delta}} \e^{-\frac{(N-1)u}{N}} \Pi_x( a^T(\nabla u),a^T(\nabla u) ) \ \dx. \nonumber
    \end{align}
    Next, test \eqref{Main} with $\e^{-\frac{(N-1)u}{N}} f(u) \zeta_{\delta}$, we get
    \begin{align*}
        & \quad \int_{\Omega} \e^{-\frac{(N-1)u}{N}} f^2(u) \zeta_{\delta} \ \dx +\frac{N-1}{N} \int_{\Omega} e^{-\frac{(N-1)u}{N}} H^{N}(\nabla u) f(u) \zeta_{\delta} \ \dx \\
        & = \int_{\Omega} \e^{-\frac{(N-1)u}{N}} H^N(\nabla u) f'(u) \zeta_{\delta} \ \dx - \frac{1}{\delta}\int_{\Omega_{\delta}\backslash \Omega_{2\delta}}\e^{-\frac{(N-1)u}{N}} f(u) a(\nabla u) \cdot \nu \ \dx.
     \end{align*}
     Recall $\Phi(u) = \e^{-u} f(u) $ and write
     \begin{align*}
         f'(u) = \e^{u} \Phi'(u) + f(u),
     \end{align*}
     we get
     \begin{align}
        & \quad \frac{N-1}{N}\int_{\Omega} \e^{-\frac{(N-1)u}{N}}  f^2(u) \zeta_{\delta} \ \dx  - \frac{N-1}{N^2} \int_{\Omega} e^{-\frac{(N-1)u}{N}} H^{N}(\nabla u) f(u) \zeta_{\delta} \ \dx  \label{equa-2}\\
        & = \frac{N-1}{N}\int_{\Omega} \e^{\frac{u}{N}} H^N(\nabla u) \Phi'(u) \zeta_{\delta} \ \dx - \frac{N-1}{N\delta}\int_{\Omega_{\delta}\backslash \Omega_{2\delta}}\e^{-\frac{(N-1)u}{N}} f(u) a(\nabla u) \cdot \nu \ \dx. \nonumber
    \end{align}
    Combine \eqref{inequ-1} and \eqref{equa-2}, we obtain
    \begin{align}\label{inequa-2}
       &\quad  \frac{N-1}{N}\int_{\Omega} \e^{\frac{u}{N}} H^N(\nabla u) \Phi'(u) \zeta_{\delta} \ \dx \\
        & \geq \frac{(N-1)^2}{N^2 \delta} \int_{\Omega_{\delta} \backslash \Omega_{2\delta}} \e^{-\frac{(N-1)u}{N}} H^N(\nabla u) a(\nabla u) \cdot \nu \ dx \nonumber \\
        &  -\frac{1}{\delta} \int_{\Omega_{\delta} \backslash \Omega_{2\delta}} \e^{-\frac{(N-1)u}{N}} \nabla (a(\nabla u) \cdot \nu) \cdot a(\nabla u) \ \dx  \nonumber\\
        &  -\frac{1}{N\delta} \int_{\Omega_{\delta} \backslash \Omega_{2\delta}} \e^{-\frac{(N-1)u}{N}}  a(\nabla u) \cdot \nu f(u)  \ \dx  \\
        &   + \frac{1}{\delta} \int_{\Omega_{\delta} \backslash \Omega_{2\delta}} \e^{-\frac{(N-1)u}{N}} \Pi_x( a^T(\nabla u),a^T(\nabla u) ) \ \dx  \nonumber \\
        & = \frac{1}{\delta} \int_{\Omega_{\delta} \backslash \Omega_{2\delta}} B_{\Omega}(u) \ \dx. \nonumber
    \end{align}
    Let $\delta \to 0$ in \eqref{inequa-2}, we obtain
    \begin{align*}
        \frac{N-1}{N} \int_{\Omega} \e^{\frac{u}{N}} H^N(\nabla u) \Phi'(u) \ \dx \geq \int_{\pa \Omega} B_{\Omega}[u] \ \text{d} \sigma.
    \end{align*}
    The equality holds if and only if $W(x) = \lambda(x) \text{Id}$ in $\Omega$. In this case, we deduce from Lemma \ref{lem:equality-profile} that either
    \begin{align*}
        u(x) = N \log N - N \log (\ell \cdot x + c),
    \end{align*}
    with $\ell \in \R^N, c \in \R$ chosen such that $\ell \cdot x + c > 0 $ in $\Omega$, or
    \begin{align*}
        u(x) = N \log N - N \log (c_1 + c_2 H_0(x_0-x)^{\frac{N}{N-1}}),
    \end{align*}
    with $c_2 \neq 0$, $x_0 \in \R^N$ and $c_1, c_2$ be chosen such that $c_1 + c_2 H_0(x_0-x)^{\frac{N}{N-1}} $ in $\Omega$.
\end{proof}

\section{Proof of Main Theorems in Bounded Domain}\label{sec:sufficient}

The key integral inequality established in Proposition \ref{Prop2} provides the main tool for proving the rigidity results stated in Section~1. We first prove Theorem~1.8. Under the monotonicity assumption on $\Phi$ and the boundary sign condition~\eqref{unbounded-assumption}, the inequality in Proposition~1.7 must become an equality. The corresponding equality characterization then yields the desired classification of solutions.
\begin{proof}[Proof of Theorem \ref{Thm3}]
By \eqref{Thm1-assumption},
\[
       0 \geq \frac{N-1}{N}\int_\Omega e^{u/N}H^N(\nabla u)\Phi'(u)dx \geq \int_{\partial\Omega}B_\Omega[u]d\sigma\ge0 .
\]
In particular, the equality  holds. Therefore, Lemma \ref{lem:equality-profile} gives the desired result.
\end{proof}

\begin{proof}[Proof of Theorem \ref{Thm1}]
For the pure Neumann problem $h\equiv0$, we have $a(\nabla u) \cdot \nu =0$ on $\pa \Omega$. In this case, the boundary integral $\int_{\pa \Omega} B_{\Omega}[u] \ \text{d} \sigma$ only involves the integration of $\Pi_x(a^T(\nabla u), a^T(\nabla u))$ and thus we do not need to assume $u$ is an admissible weak solution to use the argument of Proposition \ref{prop2-3} which yields
\[
       0 \geq \frac{N-1}{N} \int_{\Omega} \e^{\frac{u}{N}} H^N(\nabla u) \Phi'(u) \ \dx \geq \int_{\pa \Omega} \e^{-\frac{(N-1)u}{N}} \Pi_x(a^T(\nabla u), a^T(\nabla u)) \ \text{d} \sigma \geq 0.
\]
Hence the equality  holds and $u$ is of the form \eqref{affine} or \eqref{bubble-form-intro}.
 
 Suppose first that $v=c+\ell\cdot x$ with $\ell\ne0$. Then $\widehat{a}(\nabla v) = \widehat a(\ell)$ is a non-zero constant vector. 
 Multiplying \eqref{v-equation} by $\hat{a}(\nabla v) \cdot x$, integrating over $\partial\Omega$, and using the divergence theorem, we obtain
\begin{align*}
    0&=\int_{\partial\Omega}( \hat{a}(\nabla v)  \cdot x)(\hat{a}(\nabla v)  \cdot \nu)\,d\sigma\\
    &=\int_\Omega\operatorname{div}\bigl((\hat{a}(\nabla v) \cdot x) \hat{a}(\nabla v)  \bigr)\,dx =|\hat{a}(\ell) |^2|\Omega|,
\end{align*}
which is impossible. Therefore, if $u$ takes the form \eqref{affine}, then we have $\ell =0$ and $u$ is a constant.

Suppose next that $u$ has the form \eqref{bubble-form-intro}. Then, from the proof of \eqref{lem:equality-profile}, we have
\begin{align*}
       \hat{a}(\nabla v)  =\lambda (x - x_0),\ \ \lambda\ne0, \ \ x_0 \in \R^N.
\end{align*}
Then \eqref{v-equation} gives $(x-x_0)\cdot\nu=0$ on $\partial\Omega$.  Hence
\begin{align*}
0&=\int_{\partial\Omega}(x-x_0)\cdot\nu\,d\sigma
 =\int_\Omega\operatorname{div}(x-x_0)\,dx
 =N|\Omega|,
\end{align*}
again a contradiction. Hence, $u$ must be a constant.
\end{proof}

Next, we give some sufficient conditions for $\int_{\pa \Omega} B_{\Omega}[u] \ \text{d}\sigma \geq 0$. We first give an equivalent expression of $\int_{\pa \Omega} B_{\Omega} [u] \ \text{d} \sigma$.  We denote by
\[
    \mathcal{H}_{\pa \Omega}(x):=\frac{1}{N-1}\operatorname{div}_\Gamma\nu(x)
\]
the mean curvature of $\partial\Omega$ with respect to the
outward unit normal $\nu$. Thus $\mathcal{H}_{\pa \Omega} \geq0$ if $\Omega$ is convex.

For $x\in\partial\Omega$, $s\in\R$, and $\tau\in T_x\partial\Omega$, let $\rho=\rho(x,s,\tau)$ be the
unique real number satisfying
\begin{align}\label{rho-eq}
    a\bigl(\tau+\rho\nu(x)\bigr)\cdot\nu(x)=-h(s).
\end{align}
The uniqueness follows from the strict convexity and coercivity of
\[
    \rho\longmapsto\frac1N H^N\bigl(\tau+\rho\nu(x)\bigr).
\]
Set
\[
    \xi=\xi(x,s,\tau):=\tau+\rho(x,s,\tau)\nu(x).
\]

\begin{lemma}\label{lem:boundary-reduction}
Define
\begin{align}\label{R-density}
    \mathcal R_{f,h,\Omega,H}(x,s,\tau) = & \Pi_x\bigl(a(\xi)+h(s)\nu,a(\xi)+h(s)\nu\bigr) +(N-1) \mathcal{H}_{\pa \Omega}(x)h(s)^2 -\frac{N-1}{N} h(s)f(s) \nonumber\\
    &-\frac{(N-1)^2}{N^2}h(s)H^N(\xi) +\bigg(2h'(s)- \frac{N-1}{N} h(s)\bigg)a(\xi)\cdot\tau .
\end{align}
Then every classical solution of \eqref{Main} satisfies
\begin{align}\label{eq:int-reduction}
    \int_{\partial\Omega}B_\Omega[u]\,d\sigma = \int_{\partial\Omega}e^{- \frac{(N-1)u}{N}} \mathcal R_{f,h,\Omega,H} \bigl(x,u,\nabla_{\pa \Omega} u\bigr)\,d\sigma,
\end{align}
where $\nabla_{\pa \Omega}$ is the derivative along $\pa \Omega$. The same identity holds for admissible weak solutions by
approximation.
\end{lemma}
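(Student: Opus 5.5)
Fix a classical solution $u$ and work pointwise on $\partial\Omega$. The plan is to rewrite each of the four terms of $B_\Omega[u]$ in \eqref{eq:BOmega} using only $u$ and the tangential gradient $\nabla_{\partial\Omega}u$, with one integration by parts along $\partial\Omega$. The surviving terms should then assemble into $\mathcal R_{f,h,\Omega,H}(x,u,\nabla_{\partial\Omega}u)$.

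\emph{Boundary identities.} Since $a(\nabla u)\cdot\nu=-h(u)$ on $\partial\Omega$, we have $\nabla u=\nabla_{\partial\Omega}u+(\partial_\nu u)\nu$ with $\partial_\nu u=\rho(x,u,\nabla_{\partial\Omega}u)$ by \eqref{rho-eq} and its uniqueness. Hence $\nabla u=\xi(x,u,\nabla_{\partial\Omega}u)$, and $a^T(\nabla u)=a(\xi)+h(u)\nu$. This gives the $\Pi_x$ term of $\mathcal R$. It also turns the first and third terms of $B_\Omega$ into $-\frac{(N-1)^2}{N^2}h H^N(\xi)$ and $\frac1N hf$.

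\emph{The second-derivative term.} Split
\[
a(\nabla u)\cdot\nabla\bigl(a(\nabla u)\cdot\nu\bigr)=a^T\cdot\nabla_{\partial\Omega}\bigl(a\cdot\nu\bigr)+(a\cdot\nu)\,\partial_\nu\bigl(a\cdot\nu\bigr).
\]
The tangential part equals $-h'(u)\,a^T\cdot\nabla_{\partial\Omega}u=-h'(u)\,a(\xi)\cdot\tau$, because $\nu\cdot\tau=0$.

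For the normal part, use the decomposition of the divergence on $\partial\Omega$:
\[
\operatorname{div}a=\operatorname{div}_{\partial\Omega}a^T+(N-1)\mathcal H_{\partial\Omega}\,(a\cdot\nu)+\partial_\nu(a\cdot\nu).
\]
This holds because $\operatorname{div}_{\partial\Omega}\bigl((a\cdot\nu)\nu\bigr)=(a\cdot\nu)\operatorname{div}_{\partial\Omega}\nu$ and $\nu\cdot\partial_\nu a=\partial_\nu(a\cdot\nu)$ for the extended normal. Since $\operatorname{div}a=-f(u)$, we get
\[
\partial_\nu(a\cdot\nu)=-f(u)-\operatorname{div}_{\partial\Omega}a^T+(N-1)\mathcal H_{\partial\Omega}h(u).
\]
Multiplying by $-h(u)$ and then by the weight $e^{-(N-1)u/N}$, the term $e^{-(N-1)u/N}h(u)\operatorname{div}_{\partial\Omega}a^T$ is integrated by parts on the closed manifold $\partial\Omega$. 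It becomes
\[
-\int_{\partial\Omega}\Bigl(h'(u)-\tfrac{N-1}{N}h(u)\Bigr)e^{-(N-1)u/N}\,a^T\cdot\nabla_{\partial\Omega}u\,d\sigma,
\]
with $a^T\cdot\nabla_{\partial\Omega}u=a(\xi)\cdot\tau$.

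\emph{Assembly.} Collect all contributions:
\begin{itemize}
\item the $hf$ terms combine, $-h f+\frac1N hf=-\frac{N-1}{N}hf$;
\item the mean-curvature term gives $(N-1)\mathcal H_{\partial\Omega}h^2$;
\item the two $h'$ contributions add to $2h'$, leaving the coefficient $2h'-\frac{N-1}{N}h$ of $a(\xi)\cdot\tau$.
\end{itemize}
This reproduces \eqref{R-density} and hence \eqref{eq:int-reduction}.

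\emph{Main obstacle: signs.} The delicate step is the bookkeeping of signs: the orientation in the divergence decomposition, and the derivative of the weight $e^{-(N-1)u/N}$ in the tangential integration by parts. I will check them carefully against \eqref{R-density}. Regularity is not an issue here, since $a(\nabla u)\in C^1(\overline\Omega)$ makes every term well defined.

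\emph{Admissible weak solutions.} Apply the identity to the $u_j$ with data $a_j,f_j,h_j$. Both sides involve only $u_j$, $\nabla u_j$ and the data on $\partial\Omega$. They therefore pass to the limit by the $C^1(\overline\Omega)$ convergence and the locally uniform convergence of the data. The left side is interpreted as the limit of $\int_{\partial\Omega} B_\Omega[u_j]\,d\sigma$.
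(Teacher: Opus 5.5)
Your proposal is correct and follows the paper's proof essentially step for step. You identify $\nabla u=\xi(x,u,\nabla_{\partial\Omega}u)$ via the uniqueness in \eqref{rho-eq}, use the tangentially differentiated Robin condition and the boundary divergence decomposition \eqref{boundary-div-decomposition} to replace $\partial_\nu(a(\nabla u)\cdot\nu)$, integrate $e^{-(N-1)u/N}h(u)\operatorname{div}_{\partial\Omega}a^T$ by parts on the closed surface, and your sign bookkeeping checks out; one small correction for the admissible case is that the left-hand side of \eqref{eq:int-reduction} involves second derivatives, so it does not depend only on $u_j$ and $\nabla u_j$, and it must be understood, as you then say, as $\lim_j\int_{\partial\Omega}B_\Omega[u_j]\,d\sigma$, which matches the level of detail in the paper.
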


\begin{proof}
On $\partial\Omega$, the divergence decomposition gives
\begin{align}\label{boundary-div-decomposition}
    \operatorname{div}a(\nabla u) = \partial_\nu\bigl(a(\nabla u)\cdot\nu\bigr) + \operatorname{div}_{\pa \Omega} \bigl(a(\nabla u)+h(u)\nu\bigr) 
    +(N-1) \mathcal{H}_{\pa \Omega} \cdot \bigl(a(\nabla u)\cdot\nu\bigr).
\end{align}
Since
\[
    \operatorname{div}a(\nabla u)=-f(u), \qquad a(\nabla u)\cdot\nu=-h(u),
\]
we have
\begin{align}\label{normal-flux}
    \partial_\nu\bigl(a(\nabla u)\cdot\nu\bigr) = -f(u) -\operatorname{div}_{\pa \Omega} \bigl(a(\nabla u) + h(u)\nu\bigr) + (N-1)\mathcal{H}_{\pa \Omega} h(u).
\end{align}
Moreover, differentiating the Robin condition only in tangential
directions gives
\begin{align*}
    \nabla_{\pa \Omega}\bigl(a(\nabla u)\cdot\nu\bigr) =-h'(u)\nabla_{\pa \Omega} u.
\end{align*}
Hence,
\begin{align}\label{4-1-1}
    \nabla(a(\nabla u) \cdot \nu) \cdot a(\nabla u) & = \pa_{\nu} (a(\nabla u) \cdot \nu) \cdot a(\nabla u) \cdot \nu + \nabla_{\pa \Omega} (a(\nabla u) \cdot \nu) \cdot a(\nabla u) \\
    & = -h(u) \pa_{\nu} (a(\nabla u) \cdot \nu) - h'(u) \nabla_{\pa \Omega} u \cdot a(\nabla u) \nonumber
\end{align}
From \eqref{eq:BOmega}, \eqref{normal-flux} and \eqref{4-1-1}, we obtain
\begin{align}\label{B-before-IBP}
    B_\Omega[u] =e^{-\frac{N-1}{N} u}\Bigl[ & \Pi\bigl(a(\nabla u)+h(u)\nu, a(\nabla u)+h(u)\nu\bigr) +(N-1)\mathcal{H}_{\pa \Omega} h(u)^2 - \frac{N-1}{N} h(u)f(u) \nonumber\\
    & -\frac{(N-1)^2}{N^2}h(u)H^N(\nabla u) -h(u) \operatorname{div}_{\pa \Omega} \bigl(a(\nabla u)+h(u)\nu\bigr) \nonumber\\
    & +h'(u)a(\nabla u)\cdot\nabla_{\pa \Omega} u \Bigr].
\end{align}

Since $\partial\Omega$ is closed, we can use integration by parts on  surface to get
\begin{align}
    &\quad-\int_{\partial\Omega}e^{- \frac{(N-1)u}{N}}h(u) \operatorname{div}_{\pa \Omega} \bigl(a(\nabla u)+h(u)\nu\bigr)\,d\sigma \nonumber\\
    & = \int_{\partial\Omega}e^{- \frac{(N-1)u}{N}} \bigg( h'(u)- \frac{N-1}{N} h(u) \bigg) a(\nabla u)\cdot\nabla_{\pa \Omega} u\,d\sigma.
\end{align}
Combining this with \eqref{B-before-IBP} yields the coefficient $2h'(u)- \frac{N-1}{N} h(u)$.

Finally, taking
\[
    s=u(x),\qquad \tau=\nabla_{\pa \Omega} u(x),
\]
the Robin condition and the uniqueness in \eqref{rho-eq} imply
\[
    \xi\bigl(x,u,\nabla_{\pa \Omega} u\bigr)=\nabla u.
\]
Hence \eqref{eq:int-reduction} follows.
\end{proof}

\begin{prop}\label{cor:simple-RRC}
Assume that $\Omega$ is convex and that, for every $t\in\R$,
\begin{align}\label{simple-RRC}
       h(s)\le0,\qquad 2h'(s)=\frac{N-1}{N} h(s),
       \qquad (N-1)H_*h(s)^2-\frac{N-1}{N} h(s)f(s)\ge0,
\end{align}
where $H_* = \min_{x\in\partial\Omega}H_{\pa \Omega}(x)$ and $H_{\pa \Omega}(x)$ is the mean curvature at $x \in \pa \Omega$. Then $\int_{\pa \Omega} B_{\Omega}[u] \geq 0$.
\end{prop}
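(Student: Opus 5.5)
The plan is to apply Lemma \ref{lem:boundary-reduction} and show that the density $\mathcal R_{f,h,\Omega,H}(x,u,\nabla_{\pa\Omega}u)$ is pointwise nonnegative on $\partial\Omega$. The identity \eqref{eq:int-reduction} holds for classical solutions, and by approximation for admissible weak solutions. Since the weight $e^{-\frac{(N-1)u}{N}}$ is positive, a pointwise bound on $\mathcal R$ gives $\int_{\pa\Omega}B_\Omega[u]\,d\sigma\ge0$. I would go through the five terms of \eqref{R-density} one at a time, evaluated at $s=u(x)$, $\tau=\nabla_{\pa\Omega}u(x)$ and $\xi=\nabla u(x)$.

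\emph{First term.} Because $\Omega$ is convex, the second fundamental form $\Pi_x$ is positive semidefinite on $T_x\partial\Omega$. The vector $a(\xi)+h(s)\nu$ is tangential, since $a(\xi)\cdot\nu=-h(s)$ by \eqref{rho-eq}. Hence $\Pi_x(a(\xi)+h\nu,a(\xi)+h\nu)\ge0$.

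\emph{Last term.} The hypothesis $2h'(s)=\frac{N-1}{N}h(s)$ makes the coefficient of $a(\xi)\cdot\tau$ vanish identically. This is the reason for the equality assumption: the sign of $a(\nabla u)\cdot\nabla_{\pa\Omega}u$ is otherwise uncontrolled.

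\emph{Fourth term.} From $h(s)\le0$ and $H^N(\xi)\ge0$ we get $-\frac{(N-1)^2}{N^2}h(s)H^N(\xi)\ge0$.

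\emph{Middle terms.} Convexity gives $\mathcal H_{\pa\Omega}(x)\ge H_*\ge0$, so
\begin{align*}
(N-1)\mathcal H_{\pa\Omega}(x)h(s)^2-\frac{N-1}{N}h(s)f(s)\ \ge\ (N-1)H_*h(s)^2-\frac{N-1}{N}h(s)f(s)\ \ge\ 0
\end{align*}
by the third condition in \eqref{simple-RRC}.

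Adding the five contributions gives $\mathcal R\ge0$ pointwise, which proves the claim.

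The main obstacle is not the algebra but the justification. We need \eqref{eq:int-reduction} to apply to the solution under consideration, which requires $u$ to be classical or admissible as in Theorem~\ref{Thm3}. We also need the surface integration by parts in Lemma \ref{lem:boundary-reduction} to be legitimate, which requires $a(\nabla u)\in C^1$ up to the boundary. For admissible solutions I would first prove the inequality for the regularized classical solutions $u_j$. The regularized data $h_j$ do not satisfy \eqref{simple-RRC} exactly, so the sign conditions hold only up to errors, and these errors vanish as $j\to\infty$ by the locally uniform convergence of the data and the $C^1(\overline\Omega)$ convergence $u_j\to u$. I would then pass to the limit in the right-hand side of \eqref{eq:int-reduction}, which involves only first derivatives of $u_j$ and so converges.
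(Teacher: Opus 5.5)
Your proposal is correct and follows the paper's proof: invoke Lemma \ref{lem:boundary-reduction} and check termwise that $\mathcal R_{f,h,\Omega,H}(x,u,\nabla_{\pa\Omega}u)\ge0$. The mixed term is killed by $2h'=\frac{N-1}{N}h$, $\Pi_x\ge0$ on the tangential vector $a(\xi)+h\nu$, $-hH^N\ge0$, and $\mathcal H_{\pa\Omega}\ge H_*$ handles the middle terms. Your coefficient $\frac{(N-1)^2}{N^2}$ in the fourth term is correct where the paper's proof has a typo. Your approximation paragraph goes beyond the paper, which simply applies the lemma's extension to admissible solutions with the exact $f,h$; if you keep it, note that $2h_j'-\frac{N-1}{N}h_j\to0$ requires $h_j'\to h'$ locally uniformly, which is more than the $h_j\to h$ convergence stated in Definition \ref{def:weakRA}.
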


\begin{proof}
Under \eqref{simple-RRC}, the mixed term in \eqref{R-density} vanishes. Convexity gives $\Pi_x \ge 0$. Besides, since $$(N-1)H_{\pa \Omega}(x) h(s)^2-\frac{N-1}{N} h(s)f(s)\ge0$$ and $$-\frac{N-1}{2N}h(s)H^N(\xi)\ge0.$$ We deduce that every term in \eqref{R-density} is non-negative.
\end{proof}

Next, we consider the special case when $H(\xi)=|\xi|$. Then $a(\xi)=|\xi|^{N-2}\xi$, and $\rho$ is determined by
\begin{align}\label{rho-isotropic}
       (|\tau|^2+\rho^2)^{(N-2)/2}\rho=-h(s).
\end{align}
Write
\begin{align*}
       \Lambda=(|\tau|^2+\rho^2)^{1/2}.
\end{align*}
Then
\begin{align*}
       a(\nabla u) + h(u) \nu =\Lambda^{N-2}\tau,
       \qquad H^N (\xi)=\Lambda^N,
\end{align*}
and \eqref{R-density} becomes
\begin{align}\label{isotropic-RRC}
    \calR_{f,h,\Omega,|\cdot|}(x,s,\tau) = &\Lambda^{2N-4}\Pi_x(\tau,\tau) + \bigg(2h'(s)- \frac{(N-1)(2N-1)}{N^2} h(s)\bigg) \Lambda^{N-2} |\tau|^2\nonumber\\
    & + (N-1)H_{\pa \Omega}(x)h(s)^2- \frac{N-1}{N}h(s)f(s) - \frac{(N-1)^2}{N^2} h(s)\Lambda^{N-2}\rho^2.
\end{align}

We first consider a non-positive Robin term.
\begin{prop}\label{prop:isotropic-negative-h}
 Suppose that $H(\xi)=|\xi|$ and for every $s\in \R$,
\begin{align}\label{negative-h-conditions}
       h(s)&\le0,\nonumber\\
       2h'(s)- \frac{(N-1)(2N-1)}{N^2} h(s)&\ge0,\\
       H_*h(s)^2- \frac{N-1}{N} h(s)f(s)&\ge0.\nonumber
\end{align}
Then $$\int_{\pa \Omega} B_{\Omega}[u] \ \text{d} \sigma \geq 0.$$
\end{prop}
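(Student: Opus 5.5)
The plan is to invoke Lemma \ref{lem:boundary-reduction}, which reduces the claim to a pointwise sign condition. Since $e^{-\frac{(N-1)u}{N}}>0$, it suffices to show that $\calR_{f,h,\Omega,|\cdot|}(x,s,\tau)\ge0$ for every $x\in\partial\Omega$, $s\in\R$ and $\tau\in T_x\partial\Omega$. We then evaluate this along $s=u(x)$, $\tau=\nabla_{\pa\Omega}u(x)$.

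For this we use the isotropic form \eqref{isotropic-RRC}. I would first double-check it from \eqref{R-density}, taking $a(\xi)=\Lambda^{N-2}\xi$ with $\xi=\tau+\rho\nu$. By \eqref{rho-isotropic}, the Robin relation reads $a(\xi)\cdot\nu=\Lambda^{N-2}\rho=-h(s)$, so $a(\xi)+h(s)\nu=\Lambda^{N-2}\tau$ and $a(\xi)\cdot\tau=\Lambda^{N-2}|\tau|^2$. This gives the first term $\Lambda^{2N-4}\Pi_x(\tau,\tau)$. For the $H^N$ term we write $\Lambda^N=\Lambda^{N-2}(|\tau|^2+\rho^2)$. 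Its $|\tau|^2$ part, $-\frac{(N-1)^2}{N^2}h\Lambda^{N-2}|\tau|^2$, combines with $(2h'-\frac{N-1}{N}h)\Lambda^{N-2}|\tau|^2$. Since $\frac{N-1}{N}+\frac{(N-1)^2}{N^2}=\frac{(N-1)(2N-1)}{N^2}$, this produces the stated coefficient. The $\rho^2$ part remains as the last term.

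Next I check signs term by term under \eqref{negative-h-conditions}. First, $\Pi_x(\tau,\tau)\ge0$ by convexity of $\Omega$ and $\Lambda^{2N-4}\ge0$. Second, the coefficient of $\Lambda^{N-2}|\tau|^2$ is nonnegative by the second assumption. Third, $-\frac{(N-1)^2}{N^2}h(s)\Lambda^{N-2}\rho^2\ge0$ because $h\le0$. Finally, for the zeroth-order group we use $\mathcal H_{\pa\Omega}(x)\ge H_*\ge0$, which gives $(N-1)\mathcal H_{\pa\Omega}h^2\ge (N-1)H_*h^2$.

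The third hypothesis is written as $H_*h^2-\frac{N-1}{N}hf\ge0$, without the factor $N-1$. I must still deduce $(N-1)H_*h^2-\frac{N-1}{N}hf\ge0$, and this is the one step requiring care. When $N\ge2$, we have $(N-1)H_*h^2\ge H_*h^2$ because $H_*h^2\ge0$. Hence
\[
(N-1)H_*h^2-\tfrac{N-1}{N}hf\ \ge\ H_*h^2-\tfrac{N-1}{N}hf\ \ge 0.
\]
So the stated assumption is in fact stronger than what is needed and suffices.

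With every term nonnegative, $\calR\ge0$ pointwise, and integrating gives $\int_{\pa\Omega}B_\Omega[u]\,d\sigma\ge0$. Beyond that, the only obstacle is bookkeeping: confirming that the reduction in Lemma \ref{lem:boundary-reduction} applies. It holds for classical solutions, and for admissible weak solutions by approximation. For the latter, the pointwise bound $\calR\ge0$ holds for each regularized problem, or passes to the limit under $C^1(\overline\Omega)$ convergence, since the sign conditions on $h,f$ are used only at the values $s=u(x)$.
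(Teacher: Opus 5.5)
Your proposal is correct and follows the paper's own argument. You reduce via Lemma \ref{lem:boundary-reduction} to the isotropic density \eqref{isotropic-RRC} and check that each term is nonnegative under \eqref{negative-h-conditions}; your explicit derivation of \eqref{isotropic-RRC} and the step $(N-1)\mathcal{H}_{\pa\Omega}h^2\ge H_*h^2$ (using $N\ge2$) spell out what the paper's one-line bound on the scalar terms uses implicitly. For admissible weak solutions you need no sign argument on the regularized problems, whose data need not satisfy the hypotheses anyway: $\calR_{f,h,\Omega,|\cdot|}\ge0$ holds for all $(x,s,\tau)$, and the lemma already supplies the identity for $u$ itself.
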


\begin{proof}
The first term in \eqref{isotropic-RRC} is non-negative by convexity.  The second term is non-negative by \eqref{negative-h-conditions}.  Since $h(s)\le0$, and
$$ -h(s)\Lambda^{N-2}\rho^2\ge0.$$ The remaining two scalar terms are bounded from below by $H_*h(s)^2- \frac{N-1}{N}h(s)f(s)$, which is non-negative.  Hence $$\int_{\pa \Omega} B_{\Omega}[u]\ \text{d} \sigma  \geq 0.$$
\end{proof}

Next, we give a condition for a non-negative Robin term.
\begin{prop}\label{prop:isotropic-positive-h}
Suppose that $H(\xi)=|\xi|$ and for every $s\in \R$,
\begin{align}\label{positive-h-conditions}
       h(s)&\ge0,\nonumber\\
       2h'(s)- \frac{(N-1)(2N-1)}{N^2} h(s)&\ge0,\\
       H_*h(s)^2- \frac{N-1}{N} h(s)f(s)-\frac{(N-1)^2}{N^2}h(s)^{\frac{2N-1}{N-1}}&\ge0.\nonumber
\end{align}
Then $$\int_{\pa \Omega} B_{\Omega}[u]  \ \text{d} \sigma \geq 0.$$
\end{prop}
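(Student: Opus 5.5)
The plan is to argue pointwise on $\partial\Omega$ and then integrate, exactly as in the proof of Proposition \ref{prop:isotropic-negative-h}. By Lemma \ref{lem:boundary-reduction},
\[
\int_{\partial\Omega}B_\Omega[u]\,d\sigma=\int_{\partial\Omega}e^{-\frac{(N-1)u}{N}}\calR_{f,h,\Omega,|\cdot|}\bigl(x,u,\nabla_{\pa\Omega}u\bigr)\,d\sigma .
\]
Since the exponential weight is positive, it suffices to show $\calR_{f,h,\Omega,|\cdot|}(x,s,\tau)\ge0$ for every $x\in\partial\Omega$, $s\in\R$ and $\tau\in T_x\partial\Omega$. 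I will use the isotropic form \eqref{isotropic-RRC} and treat its terms one at a time.

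The first two terms are handled directly. The term $\Lambda^{2N-4}\Pi_x(\tau,\tau)$ is nonnegative because $\Omega$ is convex. The term $\bigl(2h'(s)-\frac{(N-1)(2N-1)}{N^2}h(s)\bigr)\Lambda^{N-2}|\tau|^2$ is nonnegative by the second condition in \eqref{positive-h-conditions}.

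The remaining scalar part is
\[
(N-1)\mathcal H_{\pa\Omega}(x)h^2-\tfrac{N-1}{N}hf-\tfrac{(N-1)^2}{N^2}h\Lambda^{N-2}\rho^2 .
\]
Here the last term is now nonpositive, since $h\ge0$, and this is the step that needs care. I would control it using \eqref{rho-isotropic}.

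\begin{itemize}
\item From $\Lambda^{N-2}\rho=-h(s)\le0$ we get $\rho\le0$, and hence $\Lambda^{N-2}\rho^2=h|\rho|$.
\item Since $|\rho|\le\Lambda$ and $N\ge2$, we have $|\rho|^{N-1}\le\Lambda^{N-2}|\rho|=h$. Therefore $|\rho|\le h^{1/(N-1)}$.
\item Combining these, $h\Lambda^{N-2}\rho^2=h^2|\rho|\le h^{2+\frac1{N-1}}=h^{\frac{2N-1}{N-1}}$.
\end{itemize}

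For the curvature term I use convexity: $\mathcal H_{\pa\Omega}(x)\ge H_*\ge0$, so $(N-1)\mathcal H_{\pa\Omega}(x)h^2\ge(N-1)H_*h^2\ge H_*h^2$. Putting this together with the bound above, the scalar part is bounded below by
\[
H_*h(s)^2-\tfrac{N-1}{N}h(s)f(s)-\tfrac{(N-1)^2}{N^2}h(s)^{\frac{2N-1}{N-1}} .
\]
This quantity is nonnegative by the third condition in \eqref{positive-h-conditions}.

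Hence $\calR\ge0$ pointwise, and integrating against the positive weight gives $\int_{\partial\Omega}B_\Omega[u]\,d\sigma\ge0$. For admissible weak solutions, the same conclusion follows because Lemma \ref{lem:boundary-reduction} holds for them by approximation. The only genuinely new ingredient compared with Proposition \ref{prop:isotropic-negative-h} is the estimate $|\rho|\le h^{1/(N-1)}$, which is what produces the exponent $\frac{2N-1}{N-1}$.
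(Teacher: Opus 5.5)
Your proposal is correct and follows the paper's proof essentially verbatim. You reduce via Lemma \ref{lem:boundary-reduction} to pointwise nonnegativity of $\calR_{f,h,\Omega,|\cdot|}$ in \eqref{isotropic-RRC}, dispose of the first two terms by convexity and the second hypothesis, and control the only bad term through $|\rho|^{N-1}\le\Lambda^{N-2}|\rho|=h(s)$, which gives $h\Lambda^{N-2}\rho^2\le h^{\frac{2N-1}{N-1}}$. Your explicit step $(N-1)\mathcal{H}_{\pa\Omega}(x)h^2\ge(N-1)H_*h^2\ge H_*h^2$ (using $N\ge2$ and $H_*\ge0$) fills in a detail the paper leaves implicit, since the hypothesis is stated with $H_*$ rather than $(N-1)H_*$.
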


\begin{proof}
From \eqref{rho-isotropic},
\begin{align*}
       h(s)=\Lambda^{N-2}|\rho|\ge|\rho|^{N-1}.
\end{align*}
Therefore,
\begin{align*}
       |\rho|\le h(s)^{1/(N-1)},
       \qquad \Lambda^{N-2}\rho^2=h(s)|\rho|\le h(s)^{\frac{N}{N-1}}.
\end{align*}
The first two terms in \eqref{isotropic-RRC} are non-negative, and the last three terms are bounded from below by the last line of \eqref{positive-h-conditions}.  This proves the proposition.
\end{proof}

\begin{proof}[Proof of Corollary \ref{cor4}]
Corollary \ref{cor4} follows directly from Proposition \ref{cor:simple-RRC}, Proposition \ref{prop:isotropic-negative-h} and Proposition \ref{prop:isotropic-positive-h}.
\end{proof}

\begin{rmk}\label{ex:affine-equality-profile}
We would like to point out that the form \eqref{linear-log} in Theorem \ref{Thm3} cannot be omitted. For example, let $b>1$ and set $\Omega = B_1(0) \subset \R^2$,
\begin{align*}
       U(x)=2\log2-2\log(b+x_1), \qquad f(s)=-\frac12e^s,
       \qquad h(s)=2-be^{s/2}
\end{align*}
Put $L=b+x_1$.  Then
\begin{align*}
       \nabla U=-\frac{2}{L} e_1, \qquad \Delta U=\frac2{L^2},  \qquad f(U)=-\frac2{L^2}.
\end{align*}
Thus $\Delta U+f(U)=0$. On $\partial B_1$,
\begin{align*}
       \partial_\nu U=-\frac{2x_1}{L},
       \qquad h(U)=2-\frac{2b}{L}=\frac{2x_1}{L},
\end{align*}
and therefore the Robin condition is satisfied.  Moreover,
\begin{align*}
       \Phi(s)=e^{-s}f(s)=-\frac12,
       \qquad 2e^{-U/2}=b+x_1.
\end{align*}

We finally verify the boundary condition \eqref{B-Omega}.  Parametrize $\partial B_1$ by $x_1=\cos\theta$ and write $L=b+\cos\theta$. Then, we have
\begin{align*}
       \calR_{f,h,B_1,|\cdot|}
       (x,U,\nabla_{\pa \Omega} U)
       =\frac{4(\cos^3\theta+2b\cos^2\theta-b)}{L^3}.
\end{align*}
Since $e^{-U/2}=L/2$,
\begin{align*}
e^{-U/2}\calR_{f,h,B_1,|\cdot|}
       (x,U,\nabla_{\pa \Omega} U)
       =2\frac{d}{d\theta}
       \left(\frac{\sin\theta\cos\theta}{b+\cos\theta}\right).
\end{align*}
Integration over $[0,2\pi]$ gives $\int_{\pa \Omega} B_{\Omega}[U] \ \text{d} \sigma =0$.
\end{rmk}

\begin{rmk}\label{ex:BR-is-essential}
The sign condition $\int_{\pa \Omega} B_{\Omega}[u] \ \text{d} \sigma \geq 0$ cannot be removed from Theorem \ref{Thm3}. For instance, let
\begin{align*}
       f(s)\equiv0,\qquad h(s)=-s,
       \qquad u_\varepsilon(x)=\varepsilon x_1
\end{align*}
in $B_1(0)\subset\R^2$, where $\varepsilon\ne0$.  Then
\begin{align*}
       \Delta u_\varepsilon=0,
       \qquad
       \partial_\nu u_\varepsilon+h(u_\varepsilon)
       =\varepsilon x_1-\varepsilon x_1=0
\end{align*}
on $\partial B_1$, and $\Phi'\equiv0$. However, $u_{\varepsilon}$ is not of the form \eqref{linear-log+} or \eqref{bubble-log+}.

To determine the sign of $\int_{\pa \Omega} B_{\Omega}[u] \ \text{d} \sigma$, write $x_1=\cos\theta=c$. Then,
\begin{align*}
    \calR_{0,-\operatorname{id},B_1,|\cdot|}
       (x,u_\varepsilon,\nabla_{\pa \Omega} u_\varepsilon) = &\varepsilon^2 (2c^2-1) + \varepsilon^3 \left(\frac{3}{4}c - \frac{1}{2}c^3\right).
\end{align*}
Expanding the weight $e^{-\varepsilon c/2}$ and using
\begin{align*}
       \int_0^{2\pi}c^2\,d\theta=\pi,
       \qquad
       \int_0^{2\pi}c^4\,d\theta=\frac{3\pi}{4},
\end{align*}
we obtain
\begin{align*}
      \int_{\pa \Omega} B_{\Omega}[u_{\varepsilon}] \ \text{d} \sigma
       =-\frac\pi8\varepsilon^4+O(\varepsilon^6)<0,
\end{align*}
for every sufficiently small $\varepsilon\ne0$.
\end{rmk}



Finally, we prove Theorem \ref{thm:unbounded}.
\begin{proof}[Proof of Theorem \ref{thm:unbounded}]
Applying the similar arguments as  in the proof of  Proposition \ref{prop2-3} on $D_j$, we obtain
\begin{align*}
    \frac{N-1}{N}\int_{D_j} e^{u/N}H^N(\nabla u)\Phi'(u)\,dx \ge & \int_{\pa D_j} \overline{B}_{\Omega}[u] \, \text{d} \sigma .
\end{align*}
It follows from the condition \eqref{unbounded-assumption} and $\Phi'(t) \leq 0$, we obtain
\[
    0 \ge \frac{N-1}{N}\int_{D_j} e^{u/N}H^N(\nabla u)\Phi'(u)\,dx  \geq \int_{\pa D_j} \overline{B}_{\Omega}[u] \, \text{d} \sigma \ge0.
\]
Hence equality holds almost everywhere, which implies that $u$ is of the form \eqref{linear-log} or \eqref{bubble-log}.
\end{proof}

\textbf{Acknowledgments} Yuxia Guo was supported by the National Natural Science Foundation of China (No. 12271283) and National Key R\&D Program (2023YFA1010002). Yichen Hu is supported by NNSF of China (No. 12501135) and the Fundamental Research Funds for the Central Universities (DUT24RC(3)110). Shaolong Peng was supported by the National Natural Science Foundation of China (Nos. 12401148 and 12571113), and Beijing Natural Science Foundation (No. 1262014).

\textbf{Statements and Declarations}
The authors confirm that there are no relevant financial or non-financial competing interests to report.

\textbf{Data Availability Statement}
All data generated or analyzed during this study are included in this article.

\end{document}